\date{}
\renewcommand{\uppercasenonmath}[1]{}
\numberwithin{equation}{section} \theoremstyle{plain}
\newtheorem{lem}{Lemma}[section]
\newtheorem{cor}[lem]{Corollary}
\newtheorem{prop}[lem]{Proposition}
\newtheorem{thm}[lem]{Theorem}
\newtheorem{cond}[lem]{Condition}
\newtheorem{definition}[lem]{Definition}
\newtheorem{Ex}[lem]{Example}
\newtheorem{Quest}[lem]{Question}
\newtheorem{Property}[lem]{Property}
\newtheorem{Properties}[lem]{Properties}
\newtheorem{Subprops}{}[lem]
\newtheorem{Para}[lem]{}
\newtheorem{remark}[lem]{Remark}
\newtheorem{rem}[lem]{Remark}
\newtheorem*{ack*}{ACKNOWLEDGEMENTS}
\newcommand{\pf}{\noindent\begin {proof}}
\newcommand{\epf}{\end{proof}}
\newcommand{\ra}{\rightarrow}
\begin{document}
\begin{center}
{\Large  \bf Balance of complete cohomology in extriangulated categories}

\vspace{0.5cm}  Jiangsheng Hu, Dongdong Zhang, Tiwei Zhao\footnote{Corresponding author. \\ Jiangsheng Hu was supported by the NSF of China (Grants Nos. 11671069, 11771212), Qing Lan Project of Jiangsu Province and Jiangsu Government Scholarship for Overseas Studies (JS-2019-328). Tiwei Zhao was supported by the NSF of China (Grants Nos. 11971225, 11901341), the project ZR2019QA015 supported by Shandong Provincial Natural Science Foundation, and the Young Talents Invitation Program of Shandong Province. Panyue Zhou was supported by the National Natural Science Foundation of China (Grant Nos. 11901190, 11671221),  the Hunan Provincial Natural Science Foundation of China (Grant No. 2018JJ3205) and  the Scientific Research Fund of Hunan Provincial Education Department (Grant No. 19B239).} and Panyue Zhou
\end{center}
\medskip
\medskip
\bigskip
\centerline { \bf  Abstract}
\medskip
\leftskip10truemm \rightskip10truemm \noindent
\hspace{1em}Let $(\mathcal{C},\mathbb{E},\mathfrak{s})$ be an extriangulated category with a proper class $\xi$ of
$\mathbb{E}$-triangles.  In this paper, we study the balance of complete cohomology in $(\mathcal{C},\mathbb{E},\mathfrak{s})$, which is motivated by
a result of Nucinkis that complete cohomology of modules is not balanced in the way the absolute cohomology Ext is balanced. As an application, we give some criteria for identifying a triangulated catgory to be Gorenstein and an artin algebra to be $F$-Gorenstein.
\\[2mm]
{\bf Keywords:} complete cohomology; balance; extriangulated category; proper class.\\
{\bf 2010 Mathematics Subject Classification:} 18E30; 18E10; 18G25; 55N20.

\leftskip0truemm \rightskip0truemm
\section { \bf Introduction}

Exact categories and triangulated categories are two fundamental structures in different branches of mathematics.
 As expected, exact categories and triangulated categories are not
independent of each other. In \cite{NP}, Nakaoka and Palu introduced the notion of externally triangulated categories
(extriangulated categories for short) as a simultaneous generalization of exact categories and extension-closed subcategories of triangulated
categories (they may no longer be triangulated
categories in general). After that, the study of extriangulated categories has become an active topic, and   up to now, many results on exact categories
and triangulated categories can be unified in the same framework, e.g.
see \cite{HZZ, LN, NP,ZZ}.  Recently, the authors \cite{HZZ} studied a relative homological algebra in an extriangulated category $(\mathcal{C},\mathbb{E},\mathfrak{s})$ which parallels the relative homological algebra in a triangulated category. By specifying a class of $\mathbb{E}$-triangles, which is called a proper class $\xi$ of $\mathbb{E}$-triangles, the authors introduced $\xi$-projective, $\xi$-injective, $\xi$-$\mathcal{G}$projective and  $\xi$-$\mathcal{G}$injective dimensions, and discussed their properties.

It is well known that for any modules $M$ and $N$ over a ring $R$, a projective resolution of
$M$ and an injective coresolution of $N$ lead to the same cohomolgy group $\textrm{Ext}^{\ast}_{R}(M,N)$. This implies that the absolute cohomology Ext is balanced, which is important and fundamental for classical homological algebra. Mislin \cite{Mislin} and Nucinkis \cite{Nucinkis} defined complete cohomology of modules.   However, the complete cohomology of modules is not balanced in the way Ext is balanced by \cite[Theorem 5.2]{Nucinkis}.  Recently, the authors \cite{HZZZ}  developed a complete cohomology theory in an extriangulated category and demonstrated that this theory shared some basic properties of complete cohomology in the category of modules \cite{BF,Goichot,Mislin,Nucinkis,YC} and Tate cohomology in the triangulated category \cite{AS2,RL,RL3}.
It seems natural to characterize when complete cohomology in extriangulated categories is balanced. The aim of this paper is to study this question.

We now outline the results of the paper. In Section 2, we summarize some preliminaries
and basic facts about extriangulated categories which will be used throughout the paper.

From Section 3, we assume that $(\mathcal{C}, \mathbb{E}, \mathfrak{s})$ is an extriangulated category with enough $\xi$-projectives and enough $\xi$-injectives satisfying Condition (WIC). We first recall some definitions and basic properties of $\xi$-complete cohomology groups in $(\mathcal{C}, \mathbb{E}, \mathfrak{s})$, and then we prove that there are two long exact sequences of $\xi$-complete cohomology under some certain conditions (see Theorems \ref{thm3.6}, \ref{thm:3.11'} and \ref{thm3.11}).

In Section 4, we first show that for all objects $M$ and $N$ in $(\mathcal{C}, \mathbb{E}, \mathfrak{s})$, if $M$ has finite $\xi$-$\mathcal{G}$projective dimension and $N$ has finite $\xi$-$\mathcal{G}$injective dimension, then $\xi$-complete cohomology groups $\widetilde{\xi{\rm xt}}^i_{\mathcal{P}}(M,N)$ and $\widetilde{\xi{\rm xt}}^i_{\mathcal{I}}(M,N)$  are isomorphic for any $i\in{\mathbb{Z}}$ (see Proposition \ref{prop:key-result}), which improves \cite[Theorem 4.11]{AS2}, \cite[Theorem 2]{Ia} and \cite[Main Theorem]{RL}. As a result, we characterize when $\xi$-complete cohomology in $(\mathcal{C}, \mathbb{E}, \mathfrak{s})$ is balanced (see Theorem \ref{thm:3.4}). As consequences,  some criteria for a triangulated catgory to be Gorenstein and an artin algebra to be $F$-Gorenstein are given (see Corollaries \ref{cor:3.7} and \ref{cor3}).

\section{\bf Preliminaries}
Throughout this paper, we always assume that  $\mathcal{C}=(\mathcal{C}, \mathbb{E}, \mathfrak{s})$ is an extriangulated category and $\xi$ is a proper class of $\mathbb{E}$-triangles in  $\mathcal{C}$.  We also assume that the extriangulated category $\mathcal{C}$ has enough $\xi$-projectives and enough $\xi$-injectives satisfying Condition (WIC). Next we briefly recall some definitions and basic properties of extriangulated categories from \cite{NP}.
We omit some details here, but the reader can find them in \cite{NP}.

Let $\mathcal{C}$ be an additive category equipped with an additive bifunctor
$$\mathbb{E}: \mathcal{C}^{\rm op}\times \mathcal{C}\rightarrow {\rm Ab},$$
where ${\rm Ab}$ is the category of abelian groups. For any objects $A, C\in\mathcal{C}$, an element $\delta\in \mathbb{E}(C,A)$ is called an $\mathbb{E}$-extension.
Let $\mathfrak{s}$ be a correspondence which associates an equivalence class $$\mathfrak{s}(\delta)=\xymatrix@C=0.8cm{[A\ar[r]^x
 &B\ar[r]^y&C]}$$ to any $\mathbb{E}$-extension $\delta\in\mathbb{E}(C, A)$. This $\mathfrak{s}$ is called a {\it realization} of $\mathbb{E}$, if it makes the diagrams in \cite[Definition 2.9]{NP} commutative.
 A triplet $(\mathcal{C}, \mathbb{E}, \mathfrak{s})$ is called an {\it extriangulated category} if it satisfies the following conditions.
\begin{enumerate}
\item $\mathbb{E}\colon\mathcal{C}^{\rm op}\times \mathcal{C}\rightarrow \rm{Ab}$ is an additive bifunctor.

\item $\mathfrak{s}$ is an additive realization of $\mathbb{E}$.

\item $\mathbb{E}$ and $\mathfrak{s}$  satisfy the compatibility conditions in \cite[Definition 2.12]{NP}.

 \end{enumerate}

\begin{rem}
Note that both exact categories and triangulated categories are extriangulated categories $($see \cite[Example 2.13]{NP}$)$ and extension closed subcategories of extriangulated categories are
again extriangulated $($see \cite[Remark 2.18]{NP}$)$. However, there exist lots of extriangulated categories which
are neither exact categories nor triangulated categories $($see \cite[Proposition 3.30]{NP}, \cite[Example 4.14]{ZZ} and \cite[Remark 3.3]{HZZ}$)$.
\end{rem}

We will use the following terminology.

\begin{definition}{ \emph{(see \cite[Definitions 2.15 and 2.19]{NP})}} {\rm
 Let $(\mathcal{C}, \mathbb{E}, \mathfrak{s})$ be an extriangulated category.
\begin{enumerate}
\item A sequence $\xymatrix@C=1cm{A\ar[r]^x&B\ar[r]^{y}&C}$ is called a {\it conflation} if it realizes some $\mathbb{E}$-extension $\delta\in\mathbb{E}(C, A)$.
In this case, $x$ is called an {\it inflation} and $y$ is called a {\it deflation}.

\item  If a conflation $\xymatrix@C=0.6cm{A\ar[r]^x&B\ar[r]^{y}&C}$ realizes $\delta\in\mathbb{E}(C, A)$, we call the pair
$\xymatrix@C=0.6cm{(A\ar[r]^x&B\ar[r]^{y}&C, \delta)}$ an {\it $\mathbb{E}$-triangle}, and write it in the following.
\begin{center} $\xymatrix{A\ar[r]^x&B\ar[r]^{y}&C\ar@{-->}[r]^{\delta}&}$\end{center}
We usually do not write this ``$\delta$" if it is not used in the argument.

\item Let $\xymatrix{A\ar[r]^x&B\ar[r]^{y}&C\ar@{-->}[r]^{\delta}&}$ and $\xymatrix{A'\ar[r]^{x'}&B'\ar[r]^{y'}&C'\ar@{-->}[r]^{\delta'}&}$
be any pair of $\mathbb{E}$-triangles. If a triplet $(a, b, c)$ realizes $(a, c): \delta\rightarrow \delta'$, then we write it as
 $$\xymatrix{A\ar[r]^{x}\ar[d]_{a}&B\ar[r]^{y}\ar[d]_{b}&C\ar[d]_{c}\ar@{-->}[r]^{\delta}&\\
 A'\ar[r]^{x'}&B'\ar[r]^{y'}&C'\ar@{-->}[r]^{\delta'}&}$$
 and call $(a, b, c)$ a {\it morphism} of $\mathbb{E}$-triangles.
\end{enumerate}}

\end{definition}

The following condition is analogous to the weak idempotent completeness in exact category (see \cite[Condition 5.8]{NP}).

\begin{cond} \label{cond:4.11} \emph{({\rm Condition (WIC)})}  Consider the following conditions.

\begin{enumerate}
\item  Let $f\in\mathcal{C}(A, B), g\in\mathcal{C}(B, C)$ be any composable pair of morphisms. If $gf$ is an inflation, then so is $f$.

\item Let $f\in\mathcal{C}(A, B), g\in\mathcal{C}(B, C)$ be any composable pair of morphisms. If $gf$ is a deflation, then so is $g$.

\end{enumerate}

\end{cond}

\begin{Ex}\label{Ex:4.12}

\emph{(1)} If $\mathcal{C}$ is an exact category, then Condition \emph{(WIC)} is equivalent to $\mathcal{C}$ is
weakly idempotent complete \emph{(see \cite[Proposition 7.6]{B"u})}.

\emph{(2)} If $\mathcal{C}$ is a triangulated category, then Condition \emph{(WIC)} is automatically satisfied.
\end{Ex}

\begin{lem}\label{lem1} \emph{(see \cite[Proposition 3.15]{NP})} Assume that $(\mathcal{C}, \mathbb{E},\mathfrak{s})$ is an extriangulated category. Let $C$ be any object, and let $\xymatrix@C=2em{A_1\ar[r]^{x_1}&B_1\ar[r]^{y_1}&C\ar@{-->}[r]^{\delta_1}&}$ and $\xymatrix@C=2em{A_2\ar[r]^{x_2}&B_2\ar[r]^{y_2}&C\ar@{-->}[r]^{\delta_2}&}$ be any pair of $\mathbb{E}$-triangles. Then there is a commutative diagram
in $\mathcal{C}$
$$\xymatrix{
    & A_2\ar[d]_{m_2} \ar@{=}[r] & A_2 \ar[d]^{x_2} \\
  A_1 \ar@{=}[d] \ar[r]^{m_1} & M \ar[d]_{e_2} \ar[r]^{e_1} & B_2\ar[d]^{y_2} \\
  A_1 \ar[r]^{x_1} & B_1\ar[r]^{y_1} & C   }
  $$
  which satisfies $\mathfrak{s}(y^*_2\delta_1)=\xymatrix@C=2em{[A_1\ar[r]^{m_1}&M\ar[r]^{e_1}&B_2]}$ and
  $\mathfrak{s}(y^*_1\delta_2)=\xymatrix@C=2em{[A_2\ar[r]^{m_2}&M\ar[r]^{e_2}&B_1]}$.

\end{lem}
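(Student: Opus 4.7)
My plan is to construct the middle object $M$ as the realization of the pulled-back $\mathbb{E}$-extension $y_2^{*}\delta_1$, and then to use the axioms of an extriangulated category (specifically (ET3) and the dual of the octahedral axiom (ET4)$^{\mathrm{op}}$) to identify the middle column $A_2 \to M \to B_1$ as the $\mathbb{E}$-triangle realizing $y_1^{*}\delta_2$.

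First, I would realize the $\mathbb{E}$-extension $y_2^{*}\delta_1 \in \mathbb{E}(B_2, A_1)$ as an $\mathbb{E}$-triangle $A_1 \xrightarrow{m_1} M \xrightarrow{e_1} B_2 \dashrightarrow y_2^{*}\delta_1$, which exists by the realization axiom. Since $y_2^{*}\delta_1 = \mathbb{E}(y_2, A_1)(\delta_1)$, axiom (ET3) of \cite[Definition 2.12]{NP} produces a morphism of $\mathbb{E}$-triangles with identity on $A_1$ and $y_2$ on $B_2$, and its middle component gives a map $e_2 \colon M \to B_1$ satisfying $e_2 m_1 = x_1$ and $y_1 e_2 = y_2 e_1$. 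This already fills in the lower-left and lower-right squares of the target diagram, and realizes the middle row.

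Next, I would apply axiom (ET4)$^{\mathrm{op}}$ of \cite[Definition 2.12]{NP} to the composable pair of deflations $M \xrightarrow{e_1} B_2 \xrightarrow{y_2} C$. The ``kernel'' of $e_1$ is $A_1$ and the ``kernel'' of $y_2$ is $A_2$, so the octahedral output supplies an $\mathbb{E}$-triangle of the form $A_2 \xrightarrow{m_2} M \xrightarrow{e_2'} B_1$ together with a commutative diagram relating it to the two input $\mathbb{E}$-triangles. The compatibility clauses of (ET4)$^{\mathrm{op}}$ force $e_1 m_2 = x_2$ (filling in the top-right square), and they match the comparison morphism $e_2'$ with the $e_2$ produced in the previous step, so we may take them equal. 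Combined with the bifunctoriality of $\mathbb{E}$, the same compatibility identifies the realized extension of $A_2 \xrightarrow{m_2} M \xrightarrow{e_2} B_1$ with $y_1^{*}\delta_2$.

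The main obstacle will be the last identification: showing that the extension realized by the middle column equals $y_1^{*}\delta_2$ on the nose, and not merely up to an unspecified automorphism of $A_2$ or $B_1$. This requires carefully reading the octahedral diagram of (ET4)$^{\mathrm{op}}$ through the bifunctor $\mathbb{E}$: one applies $\mathbb{E}(y_1, A_2)$ to $\delta_2$ and uses the induced naturality square to pin down the realization. Once this bookkeeping is complete, the remaining commutativities in the target diagram and the uniqueness of the comparison morphism follow formally from the axioms.
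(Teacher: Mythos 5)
The paper itself gives no proof of this lemma: it is imported verbatim from \cite[Proposition 3.15]{NP}, so your argument has to be measured against the proof in that reference. Your first step is correct and is also how that proof begins: realize $y_2^{*}\delta_1$ as $A_1\xrightarrow{m_1}M\xrightarrow{e_1}B_2$, observe that $(\mathrm{id}_{A_1},y_2)\colon y_2^{*}\delta_1\to\delta_1$ is a morphism of $\mathbb{E}$-extensions, and use the realization axiom to lift it to a morphism $(\mathrm{id}_{A_1},e_2,y_2)$ of $\mathbb{E}$-triangles; this yields $e_2m_1=x_1$ and $y_1e_2=y_2e_1$, i.e.\ the two lower squares.

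Your second step has a genuine gap. Applying (ET4)$^{\mathrm{op}}$ to the composable deflations $M\xrightarrow{e_1}B_2\xrightarrow{y_2}C$ does \emph{not} produce an $\mathbb{E}$-triangle $A_2\to M\to B_1$. What that octahedron yields is an $\mathbb{E}$-triangle $E\to M\xrightarrow{y_2e_1}C$ (a ``kernel'' of the composite deflation) together with an $\mathbb{E}$-triangle $A_1\to E\to A_2$ relating the three kernels. The object $B_1$ simply does not occur in that diagram, so no compatibility clause of (ET4)$^{\mathrm{op}}$ can identify anything in it with a class in $\mathbb{E}(B_1,A_2)$, which is where $y_1^{*}\delta_2$ lives; the only $\mathbb{E}$-groups the octahedron touches are those among $M$, $B_2$, $C$, $A_1$, $A_2$ and the new object $E$. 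The issue is therefore not the ``bookkeeping'' you flag at the end, but that the claimed triangle is not an output of the axiom at all. The actual proof in \cite{NP} takes a different route: their Proposition 3.14 upgrades the realization of $y_2^{*}\delta_1$ to an $\mathbb{E}$-triangle of the form $M\to B_2\oplus B_1\to C$ with deflation $(y_2\ \ y_1)$; doing the same for $y_1^{*}\delta_2$ gives a second such $\mathbb{E}$-triangle $M'\to B_1\oplus B_2\to C$, and one then checks the two realize the same $\mathbb{E}$-extension, identifies $M\cong M'$ compatibly, and transports the triangle $A_2\to M'\to B_1$ across this isomorphism. If you want to repair your argument you must first manufacture a composite of deflations in which $B_1$ genuinely appears (the $B_1\oplus B_2$ device is the standard way to do this), rather than using the composite $y_2e_1$.
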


The following definitions are quoted verbatim from \cite[Section 3]{HZZ}. A class of $\mathbb{E}$-triangles $\xi$ is {\it closed under base change} if for any $\mathbb{E}$-triangle $$\xymatrix@C=2em{A\ar[r]^x&B\ar[r]^y&C\ar@{-->}[r]^{\delta}&\in\xi}$$ and any morphism $c\colon C' \to C$, then any $\mathbb{E}$-triangle  $\xymatrix@C=2em{A\ar[r]^{x'}&B'\ar[r]^{y'}&C'\ar@{-->}[r]^{c^*\delta}&}$ belongs to $\xi$.

Dually, a class of  $\mathbb{E}$-triangles $\xi$ is {\it closed under cobase change} if for any $\mathbb{E}$-triangle $$\xymatrix@C=2em{A\ar[r]^x&B\ar[r]^y&C\ar@{-->}[r]^{\delta}&\in\xi}$$ and any morphism $a\colon A \to A'$, then any $\mathbb{E}$-triangle  $\xymatrix@C=2em{A'\ar[r]^{x'}&B'\ar[r]^{y'}&C\ar@{-->}[r]^{a_*\delta}&}$ belongs to $\xi$.

A class of $\mathbb{E}$-triangles $\xi$ is called {\it saturated} if in the situation of Lemma \ref{lem1}, whenever  \\
$\xymatrix@C=2em{A_2\ar[r]^{x_2}&B_2\ar[r]^{y_2}&C\ar@{-->}[r]^{\delta_2 }&}$
 and $\xymatrix@C=2em{A_1\ar[r]^{m_1}&M\ar[r]^{e_1}&B_2\ar@{-->}[r]^{y_2^{\ast}\delta_1}&}$
 belong to $\xi$, then the  $\mathbb{E}$-triangle $$\xymatrix@C=2em{A_1\ar[r]^{x_1}&B_1\ar[r]^{y_1}&C\ar@{-->}[r]^{\delta_1 }&}$$  belongs to $\xi$.

An $\mathbb{E}$-triangle $\xymatrix@C=2em{A\ar[r]^x&B\ar[r]^y&C\ar@{-->}[r]^{\delta}&}$ is called {\it split} if $\delta=0$. It is easy to see that it is split if and only if $x$ is section or $y$ is retraction. The full subcategory  consisting of the split $\mathbb{E}$-triangles will be denoted by $\Delta_0$.

  \begin{definition} \emph{(see \cite[Definition 3.1]{HZZ})}\label{def:proper class} {\rm  Let $\xi$ be a class of $\mathbb{E}$-triangles which is closed under isomorphisms. Then $\xi$ is called a {\it proper class} of $\mathbb{E}$-triangles if the following conditions hold:

  \begin{enumerate}
\item  $\xi$ is closed under finite coproducts and $\Delta_0\subseteq \xi$.

\item $\xi$ is closed under base change and cobase change.

\item $\xi$ is saturated.

  \end{enumerate}}
  \end{definition}

 \begin{definition} \emph{(see \cite[Definition 4.1]{HZZ})}
 {\rm An object $P\in\mathcal{C}$  is called {\it $\xi$-projective}  if for any $\mathbb{E}$-triangle $$\xymatrix{A\ar[r]^x& B\ar[r]^y& C \ar@{-->}[r]^{\delta}& }$$ in $\xi$, the induced sequence of abelian groups $\xymatrix@C=0.6cm{0\ar[r]& \mathcal{C}(P,A)\ar[r]& \mathcal{C}(P,B)\ar[r]&\mathcal{C}(P,C)\ar[r]& 0}$ is exact. Dually, we have the definition of {\it $\xi$-injective} objects.}
\end{definition}

We denote by $\mathcal{P(\xi)}$ (resp. $\mathcal{I(\xi)}$) the class of $\xi$-projective (resp. $\xi$-injective) objects of $\mathcal{C}$. It follows from the definition that this subcategory $\mathcal{P}(\xi)$ and $\mathcal{I}(\xi)$ are full, additive, closed under isomorphisms and direct summands.

 An extriangulated  category $(\mathcal{C}, \mathbb{E}, \mathfrak{s})$ is said to  have {\it  enough
$\xi$-projectives} \ (resp. {\it  enough $\xi$-injectives}) provided that for each object $A$ there exists an $\mathbb{E}$-triangle $\xymatrix@C=2.1em{K\ar[r]& P\ar[r]&A\ar@{-->}[r]& }$ (resp. $\xymatrix@C=2em{A\ar[r]& I\ar[r]& K\ar@{-->}[r]&}$) in $\xi$ with $P\in\mathcal{P}(\xi)$ (resp. $I\in\mathcal{I}(\xi)$).

Let $\xymatrix@C=2.1em{K\ar[r]& P\ar[r]&A\ar@{-->}[r]& }$ be an $\mathbb{E}$-triangle in $\xi$ with $P\in\mathcal{P}(\xi)$, then we call $K$ the \emph{first $\xi$-syzygy} of $A$. An $n$th \emph{$\xi$-syzygy} of $A$ is defined as usual by induction. By Schanuel's lemma (\cite[Proposition 4.3]{HZZ}), any two $\xi$-syzygies
of $A$ are isomorphic modulo $\xi$-projectives.

The {\it $\xi$-projective dimension} $\xi$-${\rm pd} A$ of $A\in\mathcal{C}$ is defined inductively.
 If $A\in\mathcal{P}(\xi)$, then define $\xi$-${\rm pd} A=0$.
Next if $\xi$-${\rm pd} A>0$, define $\xi$-${\rm pd} A\leq n$ if there exists an $\mathbb{E}$-triangle
 $K\to P\to A\dashrightarrow$  in $\xi$ with $P\in \mathcal{P}(\xi)$ and $\xi$-${\rm pd} K\leq n-1$.
Finally we define $\xi$-${\rm pd} A=n$ if $\xi$-${\rm pd} A\leq n$ and $\xi$-${\rm pd} A\nleq n-1$. Of course we set $\xi$-${\rm pd} A=\infty$, if $\xi$-${\rm pd} A\neq n$ for all $n\geq 0$.

Dually we can define the {\it $\xi$-injective dimension}  $\xi$-${\rm id} A$ of an object $A\in\mathcal{C}$.


\begin{definition} \emph{(see \cite[Definition 4.4]{HZZ})}
{\rm A {\it $\xi$-exact} complex $\mathbf{X}$ is a diagram $$\xymatrix@C=2em{\cdots\ar[r]&X_1\ar[r]^{d_1}&X_0\ar[r]^{d_0}&X_{-1}\ar[r]&\cdots}$$ in $\mathcal{C}$ such that for each integer $n$, there exists an $\mathbb{E}$-triangle $\xymatrix@C=2em{K_{n+1}\ar[r]^{g_n}&X_n\ar[r]^{f_n}&K_n\ar@{-->}[r]^{\delta_n}&}$ in $\xi$ and $d_n=g_{n-1}f_n$.
}\end{definition}

\begin{definition} \emph{(see \cite[Definition 4.5]{HZZ})}
{\rm Let $\mathcal{W}$ be a class of objects in $\mathcal{C}$. An $\mathbb{E}$-triangle
$$\xymatrix@C=2em{A\ar[r]& B\ar[r]& C\ar@{-->}[r]& }$$ in $\xi$ is called to be
{\it $\mathcal{C}(-,\mathcal{W})$-exact} (resp.
{\it $\mathcal{C}(\mathcal{W},-)$-exact}) if for any $W\in\mathcal{W}$, the induced sequence of abelian groups $\xymatrix@C=2em{0\ar[r]&\mathcal{C}(C,W)\ar[r]&\mathcal{C}(B,W)\ar[r]&\mathcal{C}(A,W)\ar[r]& 0}$ (resp. \\ $\xymatrix@C=2em{0\ar[r]&\mathcal{C}(W,A)\ar[r]&\mathcal{C}(W,B)\ar[r]&\mathcal{C}(W,C)\ar[r]&0}$) is exact in ${\rm Ab}$}.
\end{definition}

\begin{definition} \emph{(see \cite[Definition 4.6]{HZZ})}
 {\rm Let $\mathcal{W}$ be a class of objects in $\mathcal{C}$. A complex $\mathbf{X}$ is called {\it $\mathcal{C}(-,\mathcal{W})$-exact} (resp.
{\it $\mathcal{C}(\mathcal{W},-)$-exact}) if it is a $\xi$-exact complex
$$\xymatrix@C=2em{\cdots\ar[r]&X_1\ar[r]^{d_1}&X_0\ar[r]^{d_0}&X_{-1}\ar[r]&\cdots}$$ in $\mathcal{C}$ such that  there is a $\mathcal{C}(-,\mathcal{W})$-exact (resp.
 $\mathcal{C}(\mathcal{W},-)$-exact) $\mathbb{E}$-triangle $$\xymatrix@C=2em{K_{n+1}\ar[r]^{g_n}&X_n\ar[r]^{f_n}&K_n\ar@{-->}[r]^{\delta_n}&}$$ in $\xi$ for each integer $n$ and $d_n=g_{n-1}f_n$.

 A $\xi$-exact complex $\mathbf{X}$ is called {\it complete $\mathcal{P}(\xi)$-exact} (resp. {\it complete $\mathcal{I}(\xi)$-exact}) if it is $\mathcal{C}(-,\mathcal{P}(\xi))$-exact (resp.
 $\mathcal{C}(\mathcal{I}(\xi),-)$-exact).}
\end{definition}

\begin{definition} \emph{(see \cite[Definition 4.7]{HZZ})}
 {\rm A  {\it complete $\xi$-projective resolution}  is a complete $\mathcal{P}(\xi)$-exact complex\\ $$\xymatrix@C=2em{\mathbf{P}:\cdots\ar[r]&P_1\ar[r]^{d_1}&P_0\ar[r]^{d_0}&P_{-1}\ar[r]&\cdots}$$ in $\mathcal{C}$ such that $P_n$ is $\xi$-projective for each integer $n$. Dually,   a  {\it complete $\xi$-injective coresolution}  is a complete $\mathcal{I}(\xi)$-exact complex $$\xymatrix@C=2em{\mathbf{I}:\cdots\ar[r]&I_1\ar[r]^{d_1}&I_0\ar[r]^{d_0}&I_{-1}\ar[r]&\cdots}$$ in $\mathcal{C}$ such that $I_n$ is $\xi$-injective for each integer $n$.}
\end{definition}

\begin{definition} \emph{(see \cite[Definition 4.8]{HZZ})}
{\rm  Let $\mathbf{P}$ be a complete $\xi$-projective resolution in $\mathcal{C}$. So for each integer $n$, there exists a $\mathcal{C}(-, \mathcal{P}(\xi))$-exact $\mathbb{E}$-triangle $\xymatrix@C=2em{K_{n+1}\ar[r]^{g_n}&P_n\ar[r]^{f_n}&K_n\ar@{-->}[r]^{\delta_n}&}$ in $\xi$. The objects $K_n$ are called {\it $\xi$-$\mathcal{G}$projective} for each integer $n$. Dually if  $\mathbf{I}$ is a complete $\xi$-injective  coresolution in $\mathcal{C}$, there exists a  $\mathcal{C}(\mathcal{I}(\xi), -)$-exact $\mathbb{E}$-triangle $\xymatrix@C=2em{K_{n+1}\ar[r]^{g_n}&I_n\ar[r]^{f_n}&K_n\ar@{-->}[r]^{\delta_n}&}$ in $\xi$ for each integer $n$. The objects $K_n$ are called {\it $\xi$-$\mathcal{G}$injective} for each integer $n$.}
\end{definition}


 We denote by $\mathcal{GP}(\xi)$ (resp. $\mathcal{GI}(\xi)$) the class of $\xi$-$\mathcal{G}$projective (resp. $\xi$-$\mathcal{G}$injective) objects.
It is obvious that $\mathcal{P(\xi)}$ $\subseteq$ $\mathcal{GP}(\xi)$ and $\mathcal{I(\xi)}$ $\subseteq$ $\mathcal{GI}(\xi)$.

\begin{definition} \emph{(see \cite[Definition 3.1]{HZZ1})}\label{df:resolution} {\rm Let $M$ be an object in $\mathcal{C}$. A {\it $\xi$-projective resolution} of $M$ is a $\xi$-exact complex $\mathbf{P}\rightarrow M$ such that $\mathbf{P}_n\in{\mathcal{P}(\xi)}$ for all $n\geq0$. Dually, a {\it $\xi$-injective coresolution} of $M$ is a $\xi$-exact complex $ M\rightarrow \mathbf{I}$ such that $\mathbf{I}_n\in{\mathcal{I}(\xi)}$ for all $n\leq0$.}
\end{definition}

We denote by $\textrm{Ch}(\mathcal{C})$ the category of complexes in $\mathcal{C}$; the objects are complexes and morphisms are chain maps. We write the complexes homologically, so an object $\mathbf{X}$ of $\textrm{Ch}(\mathcal{C})$ is of the form
$$\xymatrix@C=2em{\mathbf{X}:=\cdots \ar[r]&X_{n+1}\ar[r]^{d_{n+1}^{\mathbf{X}}}&X_n\ar[r]^{d_n^{\mathbf{X}}}&X_{n-1}\ar[r]&\cdots}.$$
The \emph{$i$th shift} of $\mathbf{X}$ is the complex $\mathbf{X}[i]$ with $n$th component $\mathbf{X}_{n-i}$ and differential $d_n^{\mathbf{X}[i]}=(-1)^{i}d_{n-i}^{\mathbf{X}}$. Assume that $\mathbf{X}$ and $\mathbf{Y}$ are complexes in $\textrm{Ch}(\mathcal{C})$.
A homomorphism $\xymatrix@C=2em{\varphi:\mathbf{X}\ar[r]&\mathbf{Y}}$ of degree $n$ is a family $(\varphi_i)_{i\in\mathbb{Z}}$ of morphisms $\xymatrix@C=2em{\varphi_i:X_i\ar[r]& Y_{i+n}}$ for all $i\in\mathbb{Z}$. In this case, we set $|\varphi|=n$. All such homomorphisms form an abelian group, denoted by $\mathcal{C}(\mathbf{X},\mathbf{Y})_n$, which is identified with $\prod_{i\in \mathbb{Z}}{\rm \mathcal{C}}(X_i,Y_{i+n})$. We let $\mathcal{C}(\mathbf{X},\mathbf{Y})$ be the complex of abelian groups with $n$th component $\mathcal{C}(\mathbf{X},\mathbf{Y})_n$ and differential $d(\varphi_i)=d_{i+n}^{\mathbf{Y}}\varphi_i-(-1)^n\varphi_{i-1}d_i^{\mathbf{X}}$ for $\varphi=(\varphi_i)\in\mathcal{C}(\mathbf{X},\mathbf{Y})_n$.
We refer to \cite{AFH,CFH} for more details.

\begin{remark} \emph{(see \cite[Remark 3.3]{HZZZ})}\label{df:resolution}\label{3.3} Let $M$ and $N$ be objects in $\mathcal{C}$.
\begin{enumerate}
\item Note that there are two $\xi$-projective resolutions $\xymatrix@C=2em{\mathbf{P}_M\ar[r]& M}$ and $\xymatrix@C=2em{\mathbf{P}_N\ar[r]& N}$ of $M$ and $N$, respectively. A homomorphism $\beta\in \mathcal{C}(\mathbf{P}_M,\mathbf{P}_N)$ is {\it bounded above} if $\beta_i=0$ for all $i\gg 0$. The subset $\overline{\mathcal{C}}(\mathbf{P}_M,\mathbf{P}_N)$, consisting of all bounded above homomorphisms, is a subcomplex with components
\begin{center}$\overline{\mathcal{C}}(\mathbf{P}_M,\mathbf{P}_N)_n=\{(\varphi_i)\in \mathcal{C}(\mathbf{P}_M,\mathbf{P}_N)_n \ | \ \varphi_i=0$ for all $i\gg 0\}.$\end{center}
We set
$$\widetilde{\mathcal{C}}(\mathbf{P}_M,\mathbf{P}_N)={\mathcal{C}}(\mathbf{P}_M,\mathbf{P}_N)/\overline{\mathcal{C}}(\mathbf{P}_M,\mathbf{P}_N).$$

\item Note that there are two $\xi$-injective coresolutions $\xymatrix@C=2em{M\ar[r]&\mathbf{I}_M}$ and $\xymatrix{N\ar[r]&\mathbf{I}_N}$ of $M$ and $N$, respectively. A homomorphism $\beta\in {\mathcal{C}}(\mathbf{I}_M,\mathbf{I}_N)$ is {\it bounded below} if $\beta_i=0$ for all $i\ll 0$. The subset $\underline{\mathcal{C}}(\mathbf{I}_M,\mathbf{I}_N)$, consisting of all bounded below homomorphisms, is a subcomplex with components
\begin{center}$\underline{\mathcal{C}}(\mathbf{I}_M,\mathbf{I}_N)_n=\{(\varphi_i)\in {\mathcal{C}}(\mathbf{I}_M,\mathbf{I}_N)_n \ | \ \varphi_i=0$ for all $i\ll 0\}.$\end{center}
We set
$$\widetilde{\mathcal{C}}(\mathbf{I}_M,\mathbf{I}_N)={\mathcal{C}}(\mathbf{I}_M,\mathbf{I}_N)/\underline{\mathcal{C}}(\mathbf{I}_M,\mathbf{I}_N).$$
\end{enumerate}
\end{remark}

\begin{definition} \emph{(see \cite[Definition 3.2]{HZZ1})}\label{df:derived-functors} {\rm Let $M$ and $N$ be objects in $\mathcal{C}$.

\begin{enumerate}
\item[{\rm (1)}] If we choose a $\xi$-projective resolution $\xymatrix@C=2em{\mathbf{P}\ar[r]& M}$ of  $M$, then for any integer $n\geq 0$, the \emph{$\xi$-cohomology groups} $\xi{\rm xt}_{\mathcal{P}(\xi)}^n(M,N)$ are defined as
$$\xi{\rm xt}_{\mathcal{P}(\xi)}^n(M,N)=H^n({\mathcal{C}}(\mathbf{P},N)).$$

\item[{\rm (2)}] If we choose a
$\xi$-injective coresolution $\xymatrix@C=2em{N\ar[r]&\mathbf{I}}$ of  $N$, then for any integer $n\geq 0$, the \emph{$\xi$-cohomology groups} $\xi{\rm xt}_{\mathcal{I}(\xi)}^n(M,N)$ are defined as $$\xi{\rm xt}_{\mathcal{I}(\xi)}^n(M,N)=H^n({\mathcal{C}}(M, \mathbf{I})).$$
\end{enumerate}}
\end{definition}

\begin{rem}\label{fact:2.5'} { By \cite[Lemma 3.2]{HZZZ}, one can see that ${\rm \xi xt}_{\mathcal{P}(\xi)}^n(-,-)$ and ${\rm \xi xt}_{\mathcal{I}(\xi)}^n(-,-)$ are cohomological functors for any integer $n\geq 0$, independent of the choice of $\xi$-projective resolutions and $\xi$-injective coresolutions, respectively. In fact, with the modifications of the usual proof, one obtains the isomorphism $\xi{\rm xt}_{\mathcal{P}(\xi)}^n(M,N)\cong \xi{\rm xt}_{\mathcal{I}(\xi)}^n(M,N),$
which is denoted by $\xi{\rm xt}_{\xi}^n(M,N).$
}
\end{rem}

\section{\bf $\xi$-complete cohomology and its long exact sequences}
The goal of this section is to study long exact sequences of $\xi$-complete cohomology, which gives some
preparations for the proof of the main result in the next section. To this end, we first recall some definitions and basic properties of $\xi$-complete cohomology in $\mathcal{C}$.

\begin{definition}\label{df:3.6} {\rm (see \cite[Definition 3.4]{HZZZ})} {\rm Let $M$ and $N$ be objects in $\mathcal{C}$, and let $n$ be an integer.
\begin{enumerate}
\item Using $\xi$-projective resolutions, we define the $n$th \emph{$\xi$-complete cohomology group}, denoted by $\widetilde{\rm \xi xt}_{\mathcal{P}}^n(M,N)$, as
$$\widetilde{\rm \xi xt}_{\mathcal{P}}^n(M,N)=H^n(\widetilde{\mathcal{C}}(\mathbf{P}_M,\mathbf{P}_N)),$$
where $\widetilde{\mathcal{C}}(\mathbf{P}_M,\mathbf{P}_N)$ is the complex defined in Remark \ref{3.3}(1).

\item Using $\xi$-injective coresolutions, we define the $n$th \emph{$\xi$-complete cohomology group}, denoted by $\widetilde{\rm \xi xt}_{\mathcal{I}}^n(M,N)$, as
$$\widetilde{\rm \xi xt}_{\mathcal{I}}^n(M,N)=H^n(\widetilde{\mathcal{C}}(\mathbf{I}_M,\mathbf{I}_N)),$$
where $\widetilde{\mathcal{C}}(\mathbf{I}_M,\mathbf{I}_N)$ is the complex defined in Remark \ref{3.3}(2).
\end{enumerate}}
\end{definition}

\begin{definition}\label{df:3.2} {\rm (see \cite[Definition 4.3]{HZZZ})} {\rm Let $M\in\mathcal{C}$ be an object. A \emph{$\xi$-complete resolution} of $M$ is a diagram $$\xymatrix@C=2em{\mathbf{T}\ar[r]^{\nu}&\mathbf{P}\ar[r]^{\pi}&M}$$ of morphisms of complexes satisfying:
  (1)  $\pi:\mathbf{P}\ra M$ is a $\xi$-projective resolution of $M$;
  (2) $\mathbf{T}$ is a complete $\xi$-projective resolution;
  (3) $\nu:\mathbf{T}\ra \mathbf{P}$ is a morphism such that $\nu_{i}$ $=$ {\rm id$_{T_{i}}$} for all $i\gg 0$.
 }
\end{definition}

The following lemma is very key, which helps us to compute $\xi$-complete cohomology for objects having finite $\xi$-$\mathcal{G}$projective dimension using $\xi$-complete resolutions.

{\begin{lem}\label{lem:complete-cohomology} {\rm (see \cite[Theorem 4.6]{HZZZ})} Let $M$ and $N$ be objects in $\mathcal{C}$. If $M$ admits a $\xi$-complete resolution $\xymatrix@C=2em{\mathbf{T}\ar[r]^{\nu}&\mathbf{P}\ar[r]^{\pi}&M,}$ then for any integer $i$, there exists  an isomorphism
$$\widetilde{\xi{\rm xt}}_{\mathcal{P}}^i(M,N)\cong H^i(\mathcal{C}(\mathbf{T},N)).$$
\end{lem}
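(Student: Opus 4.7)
The plan is to establish the isomorphism through the intermediate complex $\widetilde{\mathcal{C}}(\mathbf{T},\mathbf{P}_{N})$, where $\mathbf{P}_{N}\to N$ is a fixed $\xi$-projective resolution of $N$. Concretely, I would construct two maps,
$$\widetilde{\mathcal{C}}(\mathbf{P},\mathbf{P}_{N})\;\xrightarrow{\;\widetilde{\nu}^{\ast}\;}\;\widetilde{\mathcal{C}}(\mathbf{T},\mathbf{P}_{N})\quad\text{and}\quad H^{i}(\widetilde{\mathcal{C}}(\mathbf{T},\mathbf{P}_{N}))\;\cong\;H^{i}(\mathcal{C}(\mathbf{T},N)),$$
verify that each induces the claimed isomorphism on cohomology, and compose.

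For the first map, precomposition with $\nu:\mathbf{T}\to\mathbf{P}$ gives a chain map $\nu^{\ast}:\mathcal{C}(\mathbf{P},\mathbf{P}_{N})\to\mathcal{C}(\mathbf{T},\mathbf{P}_{N})$ that visibly carries bounded-above homomorphisms to bounded-above ones, hence descends to $\widetilde{\nu}^{\ast}$ on the quotients from Remark \ref{3.3}. Because $\nu_{i}=\mathrm{id}_{T_{i}}$ for all $i\gg 0$ (so in particular $T_{i}=P_{i}$ in those degrees), a componentwise calculation shows that $\ker\nu^{\ast}$ and $\coker\nu^{\ast}$ are concentrated in a finite range of indices $i$ and therefore consist entirely of bounded-above homomorphisms. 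Consequently $\widetilde{\nu}^{\ast}$ is already an isomorphism of complexes, not merely a quasi-isomorphism.

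For the second identification, I would exploit the short exact sequence of complexes
$$0\to\overline{\mathcal{C}}(\mathbf{T},\mathbf{P}_{N})\to\mathcal{C}(\mathbf{T},\mathbf{P}_{N})\to\widetilde{\mathcal{C}}(\mathbf{T},\mathbf{P}_{N})\to 0$$
together with two facts. First, $\mathcal{C}(\mathbf{T},\mathbf{P}_{N})$ is acyclic: regarded as the product totalisation of the double complex $(\mathcal{C}(T_{p},P_{N,q}))_{p\in\mathbb{Z},\,q\ge 0}$, each row $\mathcal{C}(\mathbf{T},P_{N,q})$ is acyclic because $\mathbf{T}$ is complete $\mathcal{P}(\xi)$-exact and $P_{N,q}\in\mathcal{P}(\xi)$; boundedness of $\mathbf{P}_{N}$ below then gives a convergent spectral sequence forcing the totalisation to vanish. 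The resulting long exact sequence in cohomology yields $H^{i}(\widetilde{\mathcal{C}}(\mathbf{T},\mathbf{P}_{N}))\cong H^{i+1}(\overline{\mathcal{C}}(\mathbf{T},\mathbf{P}_{N}))$. Second, because $\mathbf{P}_{N}$ is supported in non-negative degrees, any bounded-above homomorphism $\mathbf{T}\to\mathbf{P}_{N}$ is automatically of finite support in the index $i$, so $\overline{\mathcal{C}}(\mathbf{T},\mathbf{P}_{N})$ coincides with the direct-sum totalisation of the same double complex. For that direct-sum total complex the column spectral sequence converges: each column $\mathcal{C}(T_{p},\mathbf{P}_{N})$ is quasi-isomorphic to $\mathcal{C}(T_{p},N)$ in degree zero, since $T_{p}\in\mathcal{P}(\xi)$ and $\mathbf{P}_{N}\to N$ is a $\xi$-projective resolution. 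Accounting for the degree shift yields $H^{i+1}(\overline{\mathcal{C}}(\mathbf{T},\mathbf{P}_{N}))\cong H^{i}(\mathcal{C}(\mathbf{T},N))$, which combined with the previous paragraph gives the statement.

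The main obstacle I expect lies in the second step: acyclicity of the product totalisation demands a delicate convergence argument (products of acyclic complexes are not in general acyclic), and correctly aligning the degree shift produced by the connecting homomorphism with the column spectral sequence for $\overline{\mathcal{C}}(\mathbf{T},\mathbf{P}_{N})$ requires careful bookkeeping of the sign conventions in the hom-complex differential.
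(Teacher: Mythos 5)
First, note that the paper does not actually prove this lemma: it is quoted from \cite[Theorem 4.6]{HZZZ}, so your argument can only be judged on its own terms. Your first step is correct: since $\nu_i=\mathrm{id}_{T_i}$ for $i\gg 0$ and $\mathbf{P}_N$ is concentrated in non-negative degrees, the kernel and cokernel of $\nu^{\ast}$ consist of bounded-above homomorphisms, so $\widetilde{\nu}^{\ast}$ is an isomorphism of complexes.

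The second step, however, fails, and not merely for want of a ``delicate convergence argument'': both of its key claims are false, each being the exact opposite of the truth. Take $\mathcal{C}=\mathrm{Mod}\,\mathbb{Z}/4$ with $\xi$ all short exact sequences, $M=N=\mathbb{Z}/2$, and $\mathbf{T}$ the doubly infinite complex $\cdots\to\mathbb{Z}/4\xrightarrow{\ 2\ }\mathbb{Z}/4\xrightarrow{\ 2\ }\mathbb{Z}/4\to\cdots$. In degree $n$ the complex $\mathcal{C}(\mathbf{T},\mathbf{P}_N)$ is $\prod_{i\geq -n}\mathbb{Z}/4$ with differential $(\varphi_i)\mapsto\bigl(2(\varphi_i\mp\varphi_{i-1})\bigr)$; the constant family $\varphi_i=1$ is a cycle, and it is not a boundary because every component of a boundary lies in $2\mathbb{Z}/4$. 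So $\mathcal{C}(\mathbf{T},\mathbf{P}_N)$ is \emph{not} acyclic (its cohomology is the Tate cohomology $\mathbb{Z}/2$ in every degree, which is what the lemma predicts). Your double complex is an upper half-plane complex with exact rows: that is precisely the configuration in which the \emph{direct-sum} total complex is acyclic while the \emph{product} total complex need not be, because the filtration by rows is not exhaustive on $\mathrm{Tot}^{\Pi}$ and the row spectral sequence does not converge to its cohomology; boundedness of $\mathbf{P}_N$ below does not help, since the antidiagonals still run to $q\to+\infty$. Symmetrically, your identification $H^{i+1}(\overline{\mathcal{C}}(\mathbf{T},\mathbf{P}_N))\cong H^{i}(\mathcal{C}(\mathbf{T},N))$ is also false: you correctly note that $\overline{\mathcal{C}}(\mathbf{T},\mathbf{P}_N)$ is the direct-sum totalisation, but for finitely supported elements the row filtration \emph{is} exhaustive and bounded below, so exactness of the rows forces $\overline{\mathcal{C}}(\mathbf{T},\mathbf{P}_N)$ to be acyclic. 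Your final formula comes out right only because the two errors cancel. The repair is to interchange the two roles: (i) show $\overline{\mathcal{C}}(\mathbf{T},\mathbf{P}_N)$ is acyclic by the staircase argument on finitely supported cycles, using exactness of the rows $\mathcal{C}(\mathbf{T},P_{N,q})$; the short exact sequence then gives $H^{i}(\widetilde{\mathcal{C}}(\mathbf{T},\mathbf{P}_N))\cong H^{i}(\mathcal{C}(\mathbf{T},\mathbf{P}_N))$ with no shift; (ii) show the augmentation $\mathcal{C}(\mathbf{T},\mathbf{P}_N)\to\mathcal{C}(\mathbf{T},N)$ is a quasi-isomorphism by running the staircase up the augmented columns $\cdots\to\mathcal{C}(T_p,P_{N,1})\to\mathcal{C}(T_p,P_{N,0})\to\mathcal{C}(T_p,N)\to 0$, which are exact because $T_p\in\mathcal{P}(\xi)$ and $\mathbf{P}_N\to N$ is a $\xi$-projective resolution; this staircase starts at the augmentation row, determines one component per step, never revisits it, and therefore defines an element of the product even though it does not terminate.
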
}

\begin{rem}Note that in the module categories and triangulated categories, the cohomology groups $H^n(\mathcal{C}(\mathbf{T},N))$ are called Tate cohomology, see \cite{AS2,AM} for more details. Motivated by this, for all objects $M$ and $N$ in $\mathcal{C}$, if $M$ admits a $\xi$-complete resolution $\xymatrix@C=2em{\mathbf{T}\ar[r]^{\nu}&\mathbf{P}\ar[r]^{\pi}&M,}$ then the cohomology group $H^n(\mathcal{C}(\mathbf{T},N))$ can be defined as Tate cohomoloy group in $\mathcal{C}$ for any integer $n$, which is also denoted by $\widetilde{\xi{\rm xt}}_{\mathcal{P}}^n(M,N)$ by Lemma \ref{lem:complete-cohomology}.
\end{rem}

Now assume that $M$ admits a $\xi$-complete resolution $\xymatrix@C=2em{\mathbf{T}\ar[r]^{\nu}&\mathbf{P}\ar[r]^{\pi}&M.}$ For each $n\in\mathbb{Z}$,
we have a comparison morphism
$$
\widetilde{\varepsilon}^n_{\mathcal{P}}(M,N): {\xi{\rm xt}}_{\xi}^n(M,N) \to \widetilde{\xi{\rm xt}}_{\mathcal{P}}^n(M,N)
$$
given by
$$
H^n\mathcal{C}(\nu,N): H^n\mathcal{C}(\mathbf{P},N)\to H^n\mathcal{C}(\mathbf{T},N).
$$

We denote by $\widetilde{{\mathcal{GP}}}(\xi)$ (resp. $\widetilde{{\mathcal{GI}}}(\xi)$)  the full subcategory of $\mathcal{C}$ whose objects have finite $\xi$-$\mathcal{G}$projective (resp. $\xi$-$\mathcal{G}$injective)  dimension.


\begin{lem}\label{com-homotopy} {\rm (see \cite[Lemma 4.5]{HZZZ})}
Let $\xymatrix@C=2em{\mathbf{T}\ar[r]^{\nu}&\mathbf{P}\ar[r]^{\pi}&M}$ and $\xymatrix@C=2em{\mathbf{T}'\ar[r]^{\nu'}&\mathbf{P}'\ar[r]^{\pi'}&M'}$ be  $\xi$-complete resolutions of $M$ and $M'$, respectively. For each morphism $\mu: M\to M'$, there exists a morphism $\overline{\mu}$, unique up to homotopy, making the right-hand square of the following diagram
$$
\xymatrix{\mathbf{T}\ar[r]^{\nu}\ar[d]^{\widetilde{\mu}}&\mathbf{P}\ar[r]^{\pi}\ar[d]^{\overline{\mu}}&M\ar[d]^{\mu}\\
\mathbf{T}'\ar[r]^{\nu'}&\mathbf{P}'\ar[r]^{\pi'}&M'}
$$
commutative, and for each choice of $\overline{\mu}$, there exists a morphism $\widetilde{\mu}$, unique up to homotopy, making the left-hand square commute up to homotopy.  In particular, if \emph{$\mu=\textrm{id}_{M}$}, then $\widetilde {\mu}$ and $\overline{\mu}$ are homotopy equivalences.
\end{lem}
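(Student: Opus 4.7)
The plan is to mirror the classical comparison theorem for projective resolutions, extending it to complete resolutions by exploiting the identification $\nu_i=\mathrm{id}_{T_i}$ in high degree.

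First, for the right-hand square, I would build $\overline{\mu}:\mathbf{P}\to\mathbf{P}'$ by induction on the degree. At degree zero, use that $P_0\in\mathcal{P}(\xi)$ together with the fact that $\pi':P'_0\to M'$ sits in a deflation (extracted from the augmented $\xi$-projective resolution $\mathbf{P}'\to M'$) to lift $\mu\pi_0$ to some $\overline{\mu}_0:P_0\to P'_0$; in higher degrees, one lifts the induced morphism between $\xi$-syzygies by iterating the same $\xi$-projective lifting. Uniqueness up to homotopy is the standard argument: the difference of two lifts is a chain map covering $0:M\to M'$, and a contracting homotopy is constructed degree by degree using the $\xi$-projectivity of the $P_n$ and the $\mathcal{C}(-,\mathcal{P}(\xi))$-exactness contained in $\mathbf{P}'$.

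Second, for the left-hand square, choose $N$ large enough that $\nu_i=\mathrm{id}_{T_i}$, $\nu'_i=\mathrm{id}_{T'_i}$ and $T_i=P_i$, $T'_i=P'_i$ for all $i\geq N$. Set $\widetilde{\mu}_i=\overline{\mu}_i$ for $i\geq N$; in those degrees the left square commutes on the nose. For $i<N$, construct $\widetilde{\mu}_i:T_i\to T'_i$ by descending induction: from the inductive hypothesis the composite $d^{\mathbf{T}'}_{i+1}\widetilde{\mu}_{i+1}$ satisfies $d^{\mathbf{T}'}_{i+1}\widetilde{\mu}_{i+1}\,d^{\mathbf{T}}_{i+2}=0$, so it is a cocycle in the Hom-complex $\mathcal{C}(\mathbf{T},T'_i)$; since $T'_i\in\mathcal{P}(\xi)$ and $\mathbf{T}$ is $\mathcal{C}(-,\mathcal{P}(\xi))$-exact, this cocycle is a coboundary, yielding $\widetilde{\mu}_i$ with $\widetilde{\mu}_i\,d^{\mathbf{T}}_{i+1}=d^{\mathbf{T}'}_{i+1}\widetilde{\mu}_{i+1}$. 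The resulting chain map $\overline{\mu}\nu-\nu'\widetilde{\mu}:\mathbf{T}\to\mathbf{P}'$ vanishes in degrees $\geq N$, and a parallel descending induction, again using the $\mathcal{C}(-,\mathcal{P}(\xi))$-exactness of $\mathbf{T}$ against each $P'_i\in\mathcal{P}(\xi)$, produces the required null-homotopy. Uniqueness of $\widetilde{\mu}$ up to homotopy follows by applying exactly the same construction to the difference of two admissible lifts, which is a chain map vanishing in high degree.

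Finally, the ``in particular'' clause is a formal consequence: when $\mu=\mathrm{id}_M$, the existence statement yields lifts $\overline{\mu}:\mathbf{P}\to\mathbf{P}'$ and $\overline{\mu}'':\mathbf{P}'\to\mathbf{P}$ covering $\mathrm{id}_M$; the composites $\overline{\mu}''\,\overline{\mu}$ and $\overline{\mu}\,\overline{\mu}''$ both cover $\mathrm{id}_M$, so by the uniqueness clause they are each homotopic to the identity, making $\overline{\mu}$ a homotopy equivalence. The analogous argument at the level of $\widetilde{\mu}$, combined with the already-established equivalence of $\overline{\mu}$, yields that $\widetilde{\mu}$ is also a homotopy equivalence. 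The principal obstacle is Stage 2: one must carefully verify that the left square commutes only \emph{up to homotopy}, not strictly, and extract both $\widetilde{\mu}$ and the connecting homotopy from the single input that $\mathbf{T}$ is $\mathcal{C}(-,\mathcal{P}(\xi))$-exact while choosing the threshold $N$ uniformly for both resolutions.
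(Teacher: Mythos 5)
Your proposal is essentially correct and follows the route one would expect; note that this paper gives no proof of the lemma at all but simply cites \cite[Lemma 4.5]{HZZZ}, where the argument is precisely the comparison-theorem scheme you describe (ascending induction for $\overline{\mu}$, descending induction driven by the complete $\mathcal{P}(\xi)$-exactness of $\mathbf{T}$ for $\widetilde{\mu}$ and for the null-homotopies). Two points in your write-up need repair, though neither is fatal. First, in Stage 1 the exactness you invoke for the homotopy is misattributed: a $\xi$-projective resolution $\mathbf{P}'\to M'$ is \emph{not} assumed to be $\mathcal{C}(-,\mathcal{P}(\xi))$-exact (that is a property of complete resolutions); what you actually use is that the constituent $\mathbb{E}$-triangles of $\mathbf{P}'\to M'$ lie in $\xi$, so that $\mathcal{C}(P_n,-)$ carries them to short exact sequences of abelian groups because $P_n\in\mathcal{P}(\xi)$ --- this is exactly the content of \cite[Lemma 3.3]{HZZ1}, which the present paper invokes for the same purpose in the Horseshoe Lemma. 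Second, in the uniqueness of $\widetilde{\mu}$ the difference $\delta=\widetilde{\mu}-\widetilde{\mu}'$ of two admissible lifts need not literally vanish in high degrees; what one knows is $\nu'\delta\simeq 0$, and since $\nu'_i=\mathrm{id}$ and $T'_i=P'_i$ for $i\geq N$, the given homotopy already witnesses the homotopy identity for $\delta$ in degrees $>N$; one then extends this partial homotopy downwards by the same descending induction, using at each step that $\mathcal{C}(\mathbf{T},T'_j)$ is an exact complex because $T'_j\in\mathcal{P}(\xi)$. With these adjustments the argument is complete, including the formal ``in particular'' clause.
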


\begin{rem}
The assignment $(M,N)\mapsto \widetilde{\xi{\rm xt}}_{\mathcal{P}}^n(M,N)$ defines a functor
$$
\widetilde{\xi{\rm xt}}_{\mathcal{P}}^n: \widetilde{{\mathcal{GP}}}(\xi)^{\rm op}\times \mathcal{C}\to {\mathcal{A}}b
$$
 and the maps $\widetilde{\varepsilon}^n_{\mathcal{P}}(M,N)$ yields a morphism of functors $\widetilde{\varepsilon}^n_{\mathcal{P}}: {\xi{\rm xt}}_{\xi}^n\to \widetilde{\xi{\rm xt}}_{\mathcal{P}}^n$ such that both $\widetilde{\xi{\rm xt}}_{\mathcal{P}}^n$ and $\widetilde{\varepsilon}^n_{\mathcal{P}}$ are independent of the choice of resolutions and liftings.
\end{rem}

\begin{lem}\label{3.4}
Let
 $$\xymatrix@C=2em{\mathbf{Q}:\cdots\ar[r]&Q_1\ar[r]^{d_1}&Q_0\ar[r]^{d_0}&Q_{-1}\ar[r]&\cdots}$$
 be a complete $\xi$-projective resolution. If $M\in\widetilde{\mathcal{P}}(\xi)$ or $M\in\widetilde{\mathcal{I}}(\xi)$, then $\mathcal{C}(\mathbf{Q},M)$ is exact.
\end{lem}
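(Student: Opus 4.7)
The plan is to argue by induction on $\xi$-$\operatorname{pd} M$ in the case $M\in\widetilde{\mathcal{P}}(\xi)$ and dually by induction on $\xi$-$\operatorname{id} M$ when $M\in\widetilde{\mathcal{I}}(\xi)$, reducing everything to the two base cases $M\in\mathcal{P}(\xi)$ and $M\in\mathcal{I}(\xi)$. Throughout, write $\mathbf{Q}$ as a $\xi$-exact complex assembled, for each $n\in\mathbb{Z}$, from an $\mathbb{E}$-triangle $K_{n+1}\xrightarrow{g_n}Q_n\xrightarrow{f_n}K_n\dashrightarrow$ in $\xi$ with $d_n=g_{n-1}f_n$, and with every such $\mathbb{E}$-triangle being $\mathcal{C}(-,\mathcal{P}(\xi))$-exact (by definition of a complete $\xi$-projective resolution).

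For the base cases, I would apply $\mathcal{C}(-,M)$ to each of these $\mathbb{E}$-triangles. When $M\in\mathcal{P}(\xi)$, the $\mathcal{C}(-,\mathcal{P}(\xi))$-exactness of the $\mathbb{E}$-triangle yields a short exact sequence $0\to \mathcal{C}(K_n,M)\to \mathcal{C}(Q_n,M)\to \mathcal{C}(K_{n+1},M)\to 0$ for every $n$; when $M\in\mathcal{I}(\xi)$, the same short exact sequence is obtained directly from the defining property of $\xi$-injectives, since the $\mathbb{E}$-triangle lies in $\xi$. Given these short exact sequences, a standard diagram chase using $d_n=g_{n-1}f_n$ (i.e.\ $\mathcal{C}(d_n,M)=\mathcal{C}(f_n,M)\circ\mathcal{C}(g_{n-1},M)$, with $\mathcal{C}(f_n,M)$ injective and $\mathcal{C}(g_{n-1},M)$ surjective) shows $\ker \mathcal{C}(d_{n+1},M)=\operatorname{im}\mathcal{C}(d_n,M)$, so $\mathcal{C}(\mathbf{Q},M)$ is exact in both base cases.

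For the inductive step, assume the result for objects of strictly smaller $\xi$-projective dimension. If $\xi$-$\operatorname{pd}M=n\geq 1$, choose an $\mathbb{E}$-triangle $K\to P\to M\dashrightarrow$ in $\xi$ with $P\in\mathcal{P}(\xi)$ and $\xi$-$\operatorname{pd}K\leq n-1$. Because each $Q_i$ is $\xi$-projective, applying $\mathcal{C}(Q_i,-)$ to this $\mathbb{E}$-triangle yields a short exact sequence for each $i$, which assemble into a short exact sequence of complexes
$$0\longrightarrow \mathcal{C}(\mathbf{Q},K)\longrightarrow \mathcal{C}(\mathbf{Q},P)\longrightarrow \mathcal{C}(\mathbf{Q},M)\longrightarrow 0.$$
By the induction hypothesis $\mathcal{C}(\mathbf{Q},K)$ is exact, by the base case $\mathcal{C}(\mathbf{Q},P)$ is exact, and so the associated long exact sequence in cohomology forces $\mathcal{C}(\mathbf{Q},M)$ to be exact. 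The case $M\in\widetilde{\mathcal{I}}(\xi)$ is entirely dual: pick $M\to I\to L\dashrightarrow$ in $\xi$ with $I\in\mathcal{I}(\xi)$ and $\xi$-$\operatorname{id}L\leq n-1$, get a short exact sequence of complexes $0\to \mathcal{C}(\mathbf{Q},M)\to \mathcal{C}(\mathbf{Q},I)\to \mathcal{C}(\mathbf{Q},L)\to 0$, and conclude by induction together with the $\mathcal{I}(\xi)$-base case.

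There is no real obstacle beyond the bookkeeping: the only nontrivial input is the fact that the $\mathbb{E}$-triangles forming $\mathbf{Q}$ are simultaneously $\mathcal{C}(-,\mathcal{P}(\xi))$-exact (from the definition of complete $\xi$-projective resolution) and $\mathcal{C}(-,\mathcal{I}(\xi))$-exact (automatic because they lie in $\xi$), and that each $Q_i\in\mathcal{P}(\xi)$ allows one to apply $\mathcal{C}(Q_i,-)$ exactly to $\mathbb{E}$-triangles in $\xi$ so as to produce the short exact sequence of complexes used in the inductive step.
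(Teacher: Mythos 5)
Your proof is correct and is essentially the argument the paper delegates to its citations: the paper's "proof" is only a reference to \cite[Lemma 5.3]{HZZ} and \cite[Lemma 4.5]{HZZ1}, and your dimension-shifting induction (base cases via the $\mathcal{C}(-,\mathcal{P}(\xi))$-exactness of $\mathbf{Q}$ and the definition of $\xi$-injectivity, inductive step via the short exact sequence of complexes obtained from $\xi$-projectivity of the $Q_i$) is the standard content of those lemmas. No gaps; your version is simply more self-contained.
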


\begin{proof}
It is easy to check by \cite[Lemma 5.3]{HZZ} and \cite[Lemma 4.5]{HZZ1}.
\end{proof}

\begin{prop}\label{prop3.5}
\begin{itemize}
\item[ ]
  \item [(1)] Let $M\in\widetilde{\mathcal{GP}}(\xi)$. For any integer $n$ the following are equivalent:
  \begin{itemize}
    \item [(i)] $\xi\mbox{-}\mathcal{G}{\rm pd}M\leq n$.
    \item [(ii)] The map $\widetilde{\varepsilon}^i_{\mathcal{P}}(M,N):\xi{\rm xt}^i_{\xi}(M,N)\to \widetilde{\xi{\rm xt}}^i_{\mathcal{P}}(M,N)$ is bijective for any $i\geq n+1$ and any $N\in\mathcal{C}$.
  \end{itemize}
  \item [(2)] If $\xi\mbox{-}{\rm pd}M<\infty$, then $\widetilde{\xi{\rm xt}}^i_{\mathcal{P}}(M,-)=0$ for any $i\in\mathbb{Z}$.
  \item [(3)] If $\xi\mbox{-}{\rm pd}M<\infty$, then $\widetilde{\xi{\rm xt}}^i_{\mathcal{P}}(-,M)=0$ for any $i\in\mathbb{Z}$.
  \item [(4)] If $\xi\mbox{-}{\rm id}N<\infty$, then $\widetilde{\xi{\rm xt}}^i_{\mathcal{P}}(-,N)=0$ for any $i\in\mathbb{Z}$.
\end{itemize}
\end{prop}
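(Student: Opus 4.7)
The plan is to first dispatch parts (2), (3), and (4) uniformly by exhibiting a convenient $\xi$-complete resolution and invoking Lemma~\ref{lem:complete-cohomology} together with Lemma~\ref{3.4}, and then to prove (1) via an explicit splicing construction for (i)$\Rightarrow$(ii) and a dimension-shift argument for (ii)$\Rightarrow$(i).

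For part (2), the hypothesis $\xi\mbox{-}{\rm pd}\,M \leq n < \infty$ furnishes a finite $\xi$-projective resolution $\pi\colon \mathbf{P} \to M$ with $\mathbf{P}\colon 0 \to P_n \to \cdots \to P_0 \to 0$, and one checks that $\mathbf{T} \equiv 0$ together with the zero morphism $\nu\colon \mathbf{T} \to \mathbf{P}$ fulfils Definition~\ref{df:3.2}: condition (3) holds because $T_i = 0 = P_i$ whenever $i > n$, so $\nu_i = {\rm id}_0$ for $i \gg 0$. Lemma~\ref{lem:complete-cohomology} then gives $\widetilde{\xi{\rm xt}}_{\mathcal{P}}^{i}(M,N) = H^i\mathcal{C}(0,N) = 0$ for every $N$ and every $i$. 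For parts (3) and (4) the first variable $X$ lies automatically in $\widetilde{\mathcal{GP}}(\xi)$ (by the declared domain of $\widetilde{\xi{\rm xt}}_{\mathcal{P}}^{i}$), so $X$ admits a $\xi$-complete resolution $\mathbf{T} \to \mathbf{P}_X \to X$. Since $\xi\mbox{-}{\rm pd}\,M < \infty$ places $M$ in $\widetilde{\mathcal{P}}(\xi)$ and $\xi\mbox{-}{\rm id}\,N < \infty$ places $N$ in $\widetilde{\mathcal{I}}(\xi)$, Lemma~\ref{3.4} makes $\mathcal{C}(\mathbf{T},M)$ (respectively $\mathcal{C}(\mathbf{T},N)$) exact, and Lemma~\ref{lem:complete-cohomology} converts this into the required vanishing.

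For (1)(i)$\Rightarrow$(ii), start from a $\xi$-projective resolution $\mathbf{P}$ of $M$ whose $n$-th $\xi$-syzygy $K_n$ is $\xi$-$\mathcal{G}$projective, and pick a complete $\xi$-projective resolution $\mathbf{Q}$ of $K_n$ in which $K_n$ appears as the $0$-th syzygy. Define $\mathbf{T}$ by $T_i = P_i$ for $i \geq n$ and $T_i = Q_{i-n}$ for $i < n$, with connecting differential $T_n \to T_{n-1}$ equal to the composition $P_n \twoheadrightarrow K_n \hookrightarrow Q_{-1}$. The $\mathcal{C}(-,\mathcal{P}(\xi))$-exactness of $\mathbf{Q}$ permits lifting the inflation $K_n \hookrightarrow P_{n-1}$ through $K_n \hookrightarrow Q_{-1}$, and iterating downwards produces a chain map $\nu\colon \mathbf{T} \to \mathbf{P}$ with $\nu_j = {\rm id}_{P_j}$ for every $j \geq n$. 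The cochain complexes $\mathcal{C}(\mathbf{T},N)$ and $\mathcal{C}(\mathbf{P},N)$ then agree in all degrees $\geq n$, so $\widetilde{\varepsilon}_{\mathcal{P}}^{i}(M,N) = H^{i}\mathcal{C}(\nu,N)$ is the identity on $H^i$ for every $i \geq n+1$.

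For (1)(ii)$\Rightarrow$(i), apply the long exact sequences of Theorem~\ref{thm3.11} to the $\mathbb{E}$-triangles $K_{j+1} \to P_j \to K_j$ of the resolution of $M$. Vanishing of $\widetilde{\xi{\rm xt}}_{\mathcal{P}}^{*}(P_j,-)$ (from part (2)) and of $\xi{\rm xt}_{\xi}^{\geq 1}(P_j,-)$ (from $\xi$-projectivity) yield compatible dimension-shift isomorphisms, reducing the hypothesis to bijectivity of $\widetilde{\varepsilon}_{\mathcal{P}}^{j}(K_n,-)$ for every $j \geq 1$. Setting $N = P \in \mathcal{P}(\xi)$ and using part (3) (which gives $\widetilde{\xi{\rm xt}}_{\mathcal{P}}^{j}(K_n,P) = 0$) then forces $\xi{\rm xt}_{\xi}^{j}(K_n,P) = 0$ for every $j \geq 1$ and every $P \in \mathcal{P}(\xi)$; combined with $K_n \in \widetilde{\mathcal{GP}}(\xi)$ (inherited from $M$), the standard $\xi{\rm xt}$-vanishing characterization of $\xi$-$\mathcal{G}$projectives in \cite{HZZ} forces $K_n \in \mathcal{GP}(\xi)$, and therefore $\xi\mbox{-}\mathcal{G}{\rm pd}\,M \leq n$. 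The main obstacle I anticipate is the splicing step in (i)$\Rightarrow$(ii), specifically verifying that the spliced complex $\mathbf{T}$ is genuinely a complete $\xi$-projective resolution (i.e.\ the $\mathbb{E}$-triangle at the seam belongs to $\xi$ and remains $\mathcal{C}(-,\mathcal{P}(\xi))$-exact) and that $\nu$ can be chosen as the identity above degree $n$; this rests squarely on the $\mathcal{P}(\xi)$-exactness built into the definition of a complete $\xi$-projective resolution.
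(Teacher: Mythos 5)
Your proof is correct and follows essentially the same route as the paper: (2) via the trivial complete resolution $\mathbf{0}\to\mathbf{P}\to M$, (3)--(4) via Lemmas \ref{3.4} and \ref{lem:complete-cohomology}, (i)$\Rightarrow$(ii) by producing a $\xi$-complete resolution whose comparison map is the identity in degrees $\geq n$, and (ii)$\Rightarrow$(i) by testing against $\xi$-projectives and invoking the $\xi{\rm xt}_{\xi}(-,\mathcal{P}(\xi))$-vanishing characterization of $\xi$-$\mathcal{G}$projective dimension (the paper applies this directly to $M$ rather than dimension-shifting down to $K_n$, which shortens your last step but is otherwise the same idea). One small slip: the first-variable long exact sequence for $\widetilde{\xi{\rm xt}}_{\mathcal{P}}$ that your dimension shift uses is Theorem \ref{thm:3.11'}, not Theorem \ref{thm3.11} (which is the $\mathcal{I}$-version).
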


\begin{proof}
(1)   (i) $\Rightarrow$ (ii). Since $\xi\mbox{-}\mathcal{G}{\rm pd}M\leq n$, one has a $\xi$-complete resolution $\xymatrix@C=2em{\mathbf{T}\ar[r]^{\nu}&\mathbf{P}\ar[r]^{\pi}&M}$ of $M$ such that $\nu_i$ is bijective for each $i\geq n$. It follows that
$\widetilde{\varepsilon}^i_{\mathcal{P}}(M,N):\xi{\rm xt}^i_{\xi}(M,N)\to \widetilde{\xi{\rm xt}}^i_{\mathcal{P}}(M,N)$ is bijective for any $i\geq n+1$ and any $N\in\mathcal{C}$, as desired.

 (ii) $\Rightarrow$ (i). Assume that $\widetilde{\varepsilon}^i_{\mathcal{P}}(M,N)$ is bijective for any $i\geq n+1$ and any $N\in\mathcal{C}$. In particular, for any $i\geq n+1$ and any $P\in\mathcal{P}(\xi)$, one has $\xi{\rm xt}^i_{\xi}(M,P)\cong \widetilde{\xi{\rm xt}}^i_{\mathcal{P}}(M,P)$.
 But the latter is zero since each complete   $\mathcal{P}(\xi)$-exact resolution is $\mathcal{C}(-,\mathcal{P}(\xi))$-exact. Thus $\xi{\rm xt}^i_{\xi}(M,P)=0$ for any $i\geq n+1$ and any $P\in\mathcal{P}(\xi)$. So $\xi\mbox{-}\mathcal{G}{\rm pd}M\leq n$ by \cite[Theorem 3.8]{HZZ1}.

 (2) Assume that $\xi\mbox{-}{\rm pd}M=n<\infty$. We can choose a $\xi$-projective resolution $\textbf{P}\to M$ of length $n$. Then, in this case,
 $\xymatrix@C=2em{\mathbf{0}\ar[r]^{0}&\mathbf{P}\ar[r]^{\pi}&M}$ is a $\xi$-complete resolution of $M$, and thus  $\widetilde{\xi{\rm xt}}^i_{\mathcal{P}}(M,-)=0$ for any $i\in\mathbb{Z}$.

 (3) and (4) follow from Lemma \ref{3.4} directly.
\end{proof}

Now we show that there is a long exact sequence of $\xi$-complete cohomology under some certain conditions.

\begin{thm}\label{thm3.6}
Let $M\in\widetilde{\mathcal{GP}}(\xi)$ and consider an $\mathbb{E}$-triangle ${\mathfrak{E}}: \xymatrix{A\ar[r]^x& B\ar[r]^y& C \ar@{-->}[r]^{\delta}& }$ in $\xi$.
Then there exist morphisms $\widetilde{\partial}^n(M,{\mathfrak{E}})$, which are natural in $M$ and ${\mathfrak{E}}$, such that the following sequence
$$
\xymatrix@C=1cm{\cdots\ar[r]&\widetilde{\xi{\rm xt}}^n_{\mathcal{P}}(M,A)\ar[r]^{\widetilde{\xi{\rm xt}}^i_{\mathcal{P}}(M,x)}&\widetilde{\xi{\rm xt}}^n_{\mathcal{P}}(M,B)\ar[r]^{\widetilde{\xi{\rm xt}}^i_{\mathcal{P}}(M,y)}& \widetilde{\xi{\rm xt}}^n_{\mathcal{P}}(M,C)\ar[r]^{\widetilde{\partial}^n(M,{\mathfrak{E}})} &\widetilde{\xi{\rm xt}}^{n+1}_{\mathcal{P}}(M,A)\ar[r]&\cdots}
$$
is exact.

Moreover, the connecting map ${\widetilde{\partial}^n(M,{\mathfrak{E}})}$ makes the following diagram
$$
\xymatrix@=1.2cm{\xi{\rm xt}^n_{\xi}(M,C)\ar[r]^{{\partial}^n(M,{\mathfrak{E}})}\ar[d]^{\widetilde{\varepsilon}^n_{\mathcal{P}}(M,C)}&\xi{\rm xt}^{n+1}_{\xi}(M,A)\ar[d]^{\widetilde{\varepsilon}^{n+1}_{\mathcal{P}}(M,A)}\\
\widetilde{\xi{\rm xt}}^n_{\mathcal{P}}(M,C)\ar[r]^{\widetilde{\partial}^n(M,{\mathfrak{E}})}&\widetilde{\xi{\rm xt}}^{n+1}_{\mathcal{P}}(M,A).}
$$
commutative for each $n\in\mathbb{Z}$.
\end{thm}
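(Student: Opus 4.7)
The plan is to reduce both assertions---the long exact sequence and the commutative square---to a short exact sequence of $\mathrm{Hom}$-complexes and standard cohomological bookkeeping. Since $M\in\widetilde{\mathcal{GP}}(\xi)$, Definition~\ref{df:3.2} supplies a $\xi$-complete resolution $\xymatrix@C=1.2em{\mathbf{T}\ar[r]^{\nu}&\mathbf{P}\ar[r]^{\pi}&M}$ in which each $T_i$ and each $P_i$ is $\xi$-projective. By Lemma~\ref{lem:complete-cohomology} one has a natural identification $\widetilde{\xi{\rm xt}}^n_{\mathcal{P}}(M,N)\cong H^n\mathcal{C}(\mathbf{T},N)$, and by Definition~\ref{df:derived-functors} together with Remark~\ref{fact:2.5'} one has $\xi{\rm xt}^n_{\xi}(M,N)\cong H^n\mathcal{C}(\mathbf{P},N)$. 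The single technical input I need is that for each integer $i$, the fact that $T_i\in\mathcal{P}(\xi)$ applied to $\mathfrak{E}\in\xi$ forces, by the very definition of $\xi$-projective, the sequence of abelian groups $0\to\mathcal{C}(T_i,A)\to\mathcal{C}(T_i,B)\to\mathcal{C}(T_i,C)\to 0$ to be exact; the same holds with $P_i$ in place of $T_i$.

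Assembling these componentwise produces a short exact sequence of complexes of abelian groups
\[
0\to\mathcal{C}(\mathbf{T},A)\to\mathcal{C}(\mathbf{T},B)\to\mathcal{C}(\mathbf{T},C)\to 0,
\]
and its associated long cohomology sequence, transported across the isomorphism of Lemma~\ref{lem:complete-cohomology}, is the asserted long exact sequence. I then define $\widetilde{\partial}^n(M,\mathfrak{E})$ to be the resulting connecting homomorphism. Naturality in $\mathfrak{E}$ is immediate: a morphism of $\mathbb{E}$-triangles $\mathfrak{E}\to\mathfrak{E}'$ becomes, after applying $\mathcal{C}(\mathbf{T},-)$, a morphism of short exact sequences of complexes, and the snake connecting map is functorial. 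Naturality in $M$ uses Lemma~\ref{com-homotopy}: a morphism $\mu\colon M\to M'$ lifts to a chain map $\widetilde{\mu}\colon\mathbf{T}\to\mathbf{T}'$ that is unique up to homotopy, so the induced map on $H^n\mathcal{C}(\mathbf{T},-)$ is well-defined and commutes with the connecting homomorphism.

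For the commutative square comparing $\partial^n$ and $\widetilde{\partial}^n$, I would apply the same construction to the chain map $\nu\colon\mathbf{T}\to\mathbf{P}$: it yields a morphism between the short exact sequence $0\to\mathcal{C}(\mathbf{P},A)\to\mathcal{C}(\mathbf{P},B)\to\mathcal{C}(\mathbf{P},C)\to 0$ and its $\mathbf{T}$-version. The induced map on cohomology is, by the very definition of the comparison morphism $\widetilde{\varepsilon}^n_{\mathcal{P}}(M,-)$ recalled just before Lemma~\ref{com-homotopy}, the map $\widetilde{\varepsilon}^n_{\mathcal{P}}(M,-)$ itself; naturality of the connecting homomorphism with respect to morphisms of short exact sequences of complexes then delivers the desired square. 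The only genuinely nontrivial step is the opening observation that applying $\mathcal{C}(T_i,-)$ to $\mathfrak{E}$ produces a short exact sequence, and this is precisely where the hypotheses $T_i\in\mathcal{P}(\xi)$ and $\mathfrak{E}\in\xi$ are used essentially; all subsequent steps are formal consequences of the snake lemma and of the comparison theorems already provided by Lemmas~\ref{lem:complete-cohomology} and~\ref{com-homotopy}.
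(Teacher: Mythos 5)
Your proposal is correct and follows essentially the same route as the paper: both obtain the short exact sequence of complexes $0\to\mathcal{C}(\mathbf{T},A)\to\mathcal{C}(\mathbf{T},B)\to\mathcal{C}(\mathbf{T},C)\to 0$ (and its $\mathbf{P}$-analogue) from the $\xi$-projectivity of the terms applied to $\mathfrak{E}\in\xi$, take the snake-lemma long exact sequence of the bottom row, and derive the comparison square from the morphism of short exact sequences induced by $\nu$ together with Lemma~\ref{lem:complete-cohomology}, with naturality in $M$ via Lemma~\ref{com-homotopy}. Your write-up merely spells out the componentwise exactness and the functoriality of the connecting map in more detail than the paper does.
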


\begin{proof}
Let $\xymatrix@C=2em{\mathbf{T}\ar[r]^{\nu}&\mathbf{P}\ar[r]^{\pi}&M}$ be a $\xi$-complete resolution of $M$. Then we have a commutative diagram of complexes
$$
\xymatrix{0\ar[r]&\mathcal{C}(\mathbf{P},A)\ar[r]\ar[d]&\mathcal{C}(\mathbf{P},B)\ar[r]\ar[d]&\mathcal{C}(\mathbf{P},C)\ar[r]\ar[d]&0\\
0\ar[r]&\mathcal{C}(\mathbf{T},A)\ar[r]&\mathcal{C}(\mathbf{T},B)\ar[r]&\mathcal{C}(\mathbf{T},C)\ar[r]&0.}
$$
The rows are exact since all terms of $\mathbf{P}$ and $\mathbf{T}$ are $\xi$-projective. By the bottom row we get the desired long exact sequence, and by the commutativity of the diagram and Lemma \ref{lem:complete-cohomology} we get the desired equality. The naturality in $M$ follows from Lemma \ref{com-homotopy},
and the naturality in $\mathfrak{E}$ is clear.
\end{proof}

Using standard arguments
from relative homological algebra, one can prove the following version of the Horseshoe Lemma for $\xi$-complete resolutions.
For convenience, we give the proof.

\begin{lem}\label{com-homotopy2} {\rm (Horseshoe Lemma for $\xi$-complete resolutions)}
Let $\xymatrix{A\ar[r]^x& B\ar[r]^y& C \ar@{-->}[r]^{\delta}& }$ be an $\mathbb{E}$-triangle in $\xi$ such that $\xi\mbox{-}\mathcal{G}{\rm pd}A<\infty$ and $\xi\mbox{-}\mathcal{G}{\rm pd}C<\infty$. Let $\xymatrix@C=2em{\mathbf{T}\ar[r]^{\nu}&\mathbf{P}\ar[r]^{\pi}&A}$ and $\xymatrix@C=2em{\mathbf{T}''\ar[r]^{\nu''}&\mathbf{P}''\ar[r]^{\pi''}&C}$ be  $\xi$-complete resolutions of $A$ and $C$, respectively. Then there is a commutative diagram:
\begin{equation}\label{com}
\begin{split}
\xymatrix{\mathbf{T}\ar[r]\ar[d]^{\nu}&\mathbf{T}'\ar[r]\ar[d]^{\nu'}&\mathbf{T}''\ar[d]^{\nu''}\\
\mathbf{P}\ar[r]\ar[d]^{\pi}&\mathbf{P}'\ar[r]\ar[d]^{\pi'}&\mathbf{P}''\ar[d]^{\pi''}\\
A\ar[r]^x& B\ar[r]^y& C}
\end{split}
\end{equation}
where the two upper rows are split $\mathbb{E}$-triangles in $\xi$ and the columns are $\xi$-complete resolutions.
\end{lem}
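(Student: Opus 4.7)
The plan is to build the diagram (\ref{com}) degreewise, setting $P'_n = P_n \oplus P''_n$ and $T'_n = T_n \oplus T''_n$ with the row maps being canonical inclusions and projections; the rows are then split $\mathbb{E}$-triangles, which lie in $\xi$ by $\Delta_0 \subseteq \xi$ and closure of $\xi$ under finite coproducts. The work is to specify vertical differentials, the augmentation $\pi'$, and the comparison $\nu'$ so that the middle column is a $\xi$-complete resolution of $B$ compatible with the given outer columns.

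I would first construct the lower two rows by the classical Horseshoe argument for $\xi$-projective resolutions. Since $P''_0 \in \mathcal{P}(\xi)$ and $y$ is the deflation of an $\mathbb{E}$-triangle in $\xi$, the augmentation $\pi''_0$ lifts to some $h_0 \colon P''_0 \to B$, and $\pi'_0 := (x\pi_0,\, h_0) \colon P_0 \oplus P''_0 \to B$ realizes an $\mathbb{E}$-triangle in $\xi$ (via Lemma \ref{lem1} together with closure of $\xi$ under base change). The resulting $3 \times 3$-diagram produces a split $\mathbb{E}$-triangle in $\xi$ on the first $\xi$-syzygies, and the argument iterates to give $\pi' \colon \mathbf{P}' \to B$ with block-upper-triangular differentials compatible with $\pi$ and $\pi''$.

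For the upper part of (\ref{com}), choose $N$ large enough that $\nu_i = \mathrm{id}$ and $\nu''_i = \mathrm{id}$ for all $i \geq N$, and for those degrees take $T'_i = P'_i$ with $\nu'_i = \mathrm{id}$. For $i < N$ the same Horseshoe procedure is applied at each pair of $\xi$-exact spots $K_{n+1} \to T_n \to K_n$ and $K''_{n+1} \to T''_n \to K''_n$, together with the already-constructed split $\mathbb{E}$-triangle $K_n \to K'_n \to K''_n$ in $\xi$. The lift of the deflation $T''_n \to K''_n$ through $K'_n \to K''_n$ that is required to splice things exists because $T''_n \in \mathcal{P}(\xi)$ and the $\mathbb{E}$-triangle $K_n \to K'_n \to K''_n$ belongs to $\xi$; the block-triangular differentials of $\mathbf{T}'$ and the block-triangular comparison $\nu'_n = \left(\begin{smallmatrix} \nu_n & \ast \\ 0 & \nu''_n \end{smallmatrix}\right)$ emerge simultaneously, and the condition $\nu'_i = \mathrm{id}$ for $i \gg 0$ is built in by the choice of $N$.

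The main remaining obstacle is showing $\mathcal{C}(-,\mathcal{P}(\xi))$-exactness of the middle column. For any $Q \in \mathcal{P}(\xi)$, the split rows of the bicomplex induce, upon applying $\mathcal{C}(-,Q)$, a short exact sequence of complexes of abelian groups
$$0 \to \mathcal{C}(\mathbf{T}'', Q) \to \mathcal{C}(\mathbf{T}', Q) \to \mathcal{C}(\mathbf{T}, Q) \to 0,$$
whose outer terms are acyclic by the assumption that $\mathbf{T}$ and $\mathbf{T}''$ are complete $\xi$-projective resolutions. The long exact sequence of cohomology then forces $\mathcal{C}(\mathbf{T}', Q)$ to be acyclic, completing the verification that $\mathbf{T}'$ is a complete $\xi$-projective resolution and that the diagram has all the required properties.
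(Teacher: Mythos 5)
Your overall strategy --- splicing the classical Horseshoe construction for the $\xi$-projective resolutions (degrees $\geq N$) with a degreewise direct-sum construction for the unbounded tail, and then verifying complete $\mathcal{P}(\xi)$-exactness via the short exact sequence of Hom complexes --- is essentially the paper's: the paper likewise reduces to the $n$th syzygies for $n=\max\{\xi\mbox{-}\mathcal{G}{\rm pd}A,\xi\mbox{-}\mathcal{G}{\rm pd}C\}$ and delegates the tail to the argument of \cite[Theorem 4.16]{HZZ}. Your treatment of the lower two rows and your final acyclicity argument are fine.

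The gap is in the construction of $\mathbf{T}'$ in degrees $i<N$, and it is twofold. First, the $\mathbb{E}$-triangles on the syzygies, $K_n\to K_n'\to K_n''$, are not split: only the rows $T_n\to T_n'\to T_n''$ are split, while the syzygy triangles produced by the Horseshoe are merely $\mathbb{E}$-triangles in $\xi$ (already at degree $0$ they include the original, possibly non-split, triangle $A\to B\to C$). Second, and more seriously, for $i<N$ the complex must be prolonged toward $-\infty$, i.e.\ in the coresolving direction: given the syzygy triangle at $K_n$, one must produce an inflation $K_n'\to T_{n-1}\oplus T_{n-1}''$ extending the inflation $K_n\to T_{n-1}$; this is the \emph{dual} Horseshoe. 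The lift you invoke (of the deflation $T_n''\to K_n''$ through $K_n'\to K_n''$, justified by $T_n''\in\mathcal{P}(\xi)$) is the ingredient for the resolving direction and is of no use here, since for $n<N$ the object $K_n'$ is precisely what is being constructed. What the coresolving step actually requires is surjectivity of $\mathcal{C}(K_n',T_{n-1})\to\mathcal{C}(K_n,T_{n-1})$, i.e.\ that the syzygy triangle is $\mathcal{C}(-,\mathcal{P}(\xi))$-exact; this holds because $K_n''$ is $\xi$-$\mathcal{G}$projective, whence $\xi{\rm xt}^1_{\xi}(K_n'',T_{n-1})=0$. This is exactly where the hypotheses $\xi\mbox{-}\mathcal{G}{\rm pd}A<\infty$ and $\xi\mbox{-}\mathcal{G}{\rm pd}C<\infty$, together with the extension-closure of $\mathcal{GP}(\xi)$ (which makes $K_N'$ itself $\xi$-$\mathcal{G}$projective), enter the proof; your argument never uses them. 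If you rerun the tail as a dual Horseshoe going rightward from degree $N$, invoking the $\mathcal{G}$projectivity of the syzygies at each step, the remainder of your proof goes through.
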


\begin{proof}
{By \cite[Lemma 3.3]{HZZ1}, we have a commutative diagram
$$
\xymatrix{
\mathbf{P}\ar[r]\ar[d]^{\pi}&\mathbf{P}'\ar[r]\ar[d]^{\pi'}&\mathbf{P}''\ar[d]^{\pi''}\\
A\ar[r]^x& B\ar[r]^y& C}
$$
where $\mathbf{P}'\to B$ is a $\xi$-projective resolution of $B$. Setting $n=\max\{\xi\mbox{-}\mathcal{G}{\rm pd}A,\xi\mbox{-}\mathcal{G}{\rm pd}C\}$.
Consider an $\mathbb{E}$-triangle $\xymatrix{K_n\ar[r]&K_n'\ar[r]&K_n''\ar@{-->}[r]&,}$ where $K_n$ (respectively, $K_n'$ and $K_n''$) is the $n$th $\xi$-syzygy of $A$ (respectively, $B$ and $C$) obtained from the $\mathbf{P}$ (respectively, $\mathbf{P}'$ and $\mathbf{P}''$).
Then $K_n$ and $K_n''$ are $\xi$-$\mathcal{G}$projective by \cite[Proposition 5.2]{HZZ}. By a suitable adjustment (e.g. see \cite[Proposition 4.4]{HZZZ}), we can require that $\mathbf{T}$ and $\mathbf{T}''$ are complete $\xi$-projective resolutions of $K_n$ and $K_n''$,  respectively. As a similar argument in proof of \cite[Theorem 4.16]{HZZ}, we can construct a complete $\xi$-projective resolution $\mathbf{T}'$ of $K_n'$ such that the desired commutative diagram (\ref{com}) holds.}
\end{proof}

By Lemma \ref{com-homotopy2}, we can get the second long exact sequence of $\xi$-complete cohomology.

\begin{thm}\label{thm:3.11'}
Let ${\mathfrak{E}}: \xymatrix{A\ar[r]^x& B\ar[r]^y& C \ar@{-->}[r]^{\delta}& }$ be an $\mathbb{E}$-triangle in $\xi$ with $\xi\mbox{-}\mathcal{G}{\rm pd}A<\infty$ and $\xi\mbox{-}\mathcal{G}{\rm pd}C<\infty$. For each $N\in\mathcal{C}$,
 there exist morphisms $\widetilde{\partial}^n({\mathfrak{E}},N)$, which are natural in $N$ and ${\mathfrak{E}}$, such that the following sequence
$$
\xymatrix@C=1cm{\cdots\ar[r]&\widetilde{\xi{\rm xt}}^n_{\mathcal{P}}(C,N)\ar[r]^{\widetilde{\xi{\rm xt}}^i_{\mathcal{P}}(y,N)}&\widetilde{\xi{\rm xt}}^n_{\mathcal{P}}(B,N)\ar[r]^{\widetilde{\xi{\rm xt}}^i_{\mathcal{P}}(x,N)}& \widetilde{\xi{\rm xt}}^n_{\mathcal{P}}(A,N)\ar[r]^{\widetilde{\partial}^n({\mathfrak{E}},N)} &\widetilde{\xi{\rm xt}}^{n+1}_{\mathcal{P}}(C,N)\ar[r]&\cdots}
$$
is exact.

Moreover, the connecting map ${\widetilde{\partial}^n({\mathfrak{E}},N)}$  makes the following diagram
$$
\xymatrix@=1.2cm{\xi{\rm xt}^n_{\xi}(A,N)\ar[r]^{{\partial}^n({\mathfrak{E}},N)}\ar[d]^{\widetilde{\varepsilon}^n_{\mathcal{P}}(A,N)}&\xi{\rm xt}^{n+1}_{\xi}(C,N)\ar[d]^{\widetilde{\varepsilon}^{n+1}_{\mathcal{P}}(C,N)}\\
\widetilde{\xi{\rm xt}}^n_{\mathcal{P}}(A,N)\ar[r]^{\widetilde{\partial}^n({\mathfrak{E}},N)}&\widetilde{\xi{\rm xt}}^{n+1}_{\mathcal{P}}(C,N).}
$$
commutative for each $n\in\mathbb{Z}$.
\end{thm}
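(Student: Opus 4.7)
The plan is to mimic the argument of Theorem~\ref{thm3.6}, but with the single $\xi$-complete resolution of a fixed object replaced by a triple of compatible $\xi$-complete resolutions of $A$, $B$, $C$ supplied by the Horseshoe Lemma.

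First I would apply Lemma~\ref{com-homotopy2} to the $\mathbb{E}$-triangle $\mathfrak{E}$ to obtain the diagram~(\ref{com}), in which the top two rows are termwise split $\mathbb{E}$-triangles $\mathbf{T}\to\mathbf{T}'\to\mathbf{T}''$ and $\mathbf{P}\to\mathbf{P}'\to\mathbf{P}''$, and the three columns are $\xi$-complete resolutions of $A$, $B$, $C$ respectively. Applying the functor $\mathcal{C}(-,N)$ componentwise, the splitness of the rows yields short exact sequences of complexes of abelian groups, fitting in the commutative square
$$
\xymatrix{0\ar[r]&\mathcal{C}(\mathbf{P}'',N)\ar[r]\ar[d]&\mathcal{C}(\mathbf{P}',N)\ar[r]\ar[d]&\mathcal{C}(\mathbf{P},N)\ar[r]\ar[d]&0\\
0\ar[r]&\mathcal{C}(\mathbf{T}'',N)\ar[r]&\mathcal{C}(\mathbf{T}',N)\ar[r]&\mathcal{C}(\mathbf{T},N)\ar[r]&0,}
$$
where the vertical maps are induced by $\nu,\nu',\nu''$.

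Second, I would take the associated long exact sequences in cohomology. The bottom row produces a six-term connecting sequence in $H^\ast\mathcal{C}(\mathbf{T}_{(\cdot)},N)$; via Lemma~\ref{lem:complete-cohomology} these cohomology groups coincide with $\widetilde{\xi{\rm xt}}^\ast_{\mathcal{P}}(-,N)$, and one defines $\widetilde{\partial}^n(\mathfrak{E},N)$ to be the resulting connecting homomorphism. Naturality of the snake lemma with respect to the vertical map between the two short exact sequences of complexes then automatically gives the commutative square relating $\widetilde{\partial}^n(\mathfrak{E},N)$ to the Ext connecting morphism ${\partial}^n(\mathfrak{E},N)$ via the comparison maps $\widetilde{\varepsilon}^n_{\mathcal{P}}$, because the top row's connecting morphism is exactly ${\partial}^n(\mathfrak{E},N)$ and the vertical maps induce $\widetilde{\varepsilon}^n_{\mathcal{P}}$.

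Third, for naturality I would handle the two variables separately. Naturality in $N$ is immediate from the functoriality of $\mathcal{C}(\mathbf{T},-)$ and $\mathcal{C}(\mathbf{P},-)$, since the Horseshoe diagram need not be changed. Naturality in $\mathfrak{E}$ is the delicate part: given a morphism of $\mathbb{E}$-triangles $\mathfrak{E}\to\mathfrak{E}'$, one must lift it to a morphism between the two Horseshoe diagrams, which by Lemma~\ref{com-homotopy} applied termwise and in columns exists and is unique up to chain homotopy; since homotopic chain maps induce identical maps on cohomology, the induced comparison of long exact sequences is well-defined and functorial.

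The main obstacle, as in the proof of Theorem~\ref{thm3.6}, is ensuring that $\widetilde{\partial}^n(\mathfrak{E},N)$ does not depend on the choices made in the Horseshoe construction and on the chosen $\xi$-complete resolutions of $A$ and $C$. This is precisely where Lemma~\ref{com-homotopy} is essential: any two such choices are connected by chain maps unique up to homotopy, and these homotopies survive under $\mathcal{C}(-,N)$ and therefore induce the identity on cohomology. Once this independence is established the long exact sequence, the commutativity with $\widetilde{\varepsilon}^n_{\mathcal{P}}$, and both naturality properties follow from standard diagram chasing.
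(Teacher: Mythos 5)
Your proposal follows essentially the same route as the paper's proof: apply the Horseshoe Lemma (Lemma \ref{com-homotopy2}) to obtain the diagram (\ref{com}) with termwise split rows, apply $\mathcal{C}(-,N)$ to get the commutative ladder of short exact sequences of complexes, read off the long exact sequence from the bottom row via Lemma \ref{lem:complete-cohomology}, and obtain the compatibility square from the naturality of the connecting homomorphism. Your additional remarks on independence of choices and on naturality in $\mathfrak{E}$ via Lemma \ref{com-homotopy} are correct elaborations of points the paper leaves implicit.
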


\begin{proof}
Since $A$ and $C$ have finite $\xi$-$\mathcal{G}$projective dimension, we can construct the diagram (\ref{com}) of $\xi$-complete resolutions.
Moreover, since the two upper rows of (\ref{com}) are split $\mathbb{E}$-triangle in $\xi$, by applying the functor $\mathcal{C}(-,N)$ we can get
a commutative diagram  of complexes
$$
\xymatrix{0\ar[r]&\mathcal{C}(\mathbf{P}'',N)\ar[r]\ar[d]&\mathcal{C}(\mathbf{P}',N)\ar[r]\ar[d]&\mathcal{C}(\mathbf{P},N)\ar[r]\ar[d]&0\\
0\ar[r]&\mathcal{C}(\mathbf{T}'',N)\ar[r]&\mathcal{C}(\mathbf{T}',N)\ar[r]&\mathcal{C}(\mathbf{T},N)\ar[r]&0}
$$
with exact rows.
By the bottom row we get the desired long exact sequence, and by the commutativity of the diagram and Lemma \ref{lem:complete-cohomology} we get the desired equality. The naturality in $N$ follows from Lemma \ref{com-homotopy2},
and the naturality in $\mathfrak{E}$ is clear.
\end{proof}

In the rest of this section, we will give the $\xi$-complete cohomology theory based on $\xi$-$\mathcal{G}$injective objects. All arguments and proofs are similar to the above.

\begin{definition} {\rm Let $N\in\mathcal{C}$ be an object. A \emph{$\xi$-complete coresolution} of $N$ is a diagram $$\xymatrix@C=2em{N\ar[r]^{\iota}&\mathbf{I}\ar[r]^{\mu}&\mathbf{Q}}$$ of morphisms of complexes satisfying:
  (1)  $\iota:N\ra\mathbf{I}$ is a $\xi$-injective coresolution of $N$;
  (2) $\mathbf{Q}$ is a complete $\xi$-injective coresolution;
  (3) $\mu:\mathbf{I}\ra \mathbf{Q}$ is a morphism such that $\mu_{i}$ $=$ {\rm id$_{Q_{i}}$} for all $i\ll 0$.
 }
\end{definition}

An object $N$ in $\mathcal{C}$ admits a $\xi$-complete coresolution if and only if $N$ has finite $\xi$-$\mathcal{G}$injective dimension.

Now assume that $N$ admits a $\xi$-complete coresolution $\xymatrix@C=2em{N\ar[r]^{\iota}&\mathbf{I}\ar[r]^{\mu}&\mathbf{Q}.}$ For each $n\in\mathbb{Z}$ and each $M\in\mathcal{C}$, we have
$$\widetilde{\xi{\rm xt}}_{\mathcal{I}}^n(M,N)\cong H^n(\mathcal{C}(M,\mathbf{Q}))$$

These groups come equipped with comparison morphisms
$$
\widetilde{\varepsilon}^n_{\mathcal{I}}(M,N): {\xi{\rm xt}}_{\xi}^n(M,N) \to \widetilde{\xi{\rm xt}}_{\mathcal{I}}^n(M,N)
$$
given by
$$
H^n\mathcal{C}(M,\mu): H^n\mathcal{C}(M,\mathbf{I})\to H^n\mathcal{C}(M,\mathbf{Q}).
$$

By a dual argument to the above, $\widetilde{\xi{\rm xt}}_{\mathcal{I}}^n$ and $\widetilde{\varepsilon}^n_{\mathcal{I}}$ are independent of the choice of coresolutions and liftings.

\begin{prop}\label{prop3.10}
\begin{itemize}
\item[ ]
  \item [(1)] Let $N\in\widetilde{\mathcal{GI}}(\xi)$. For any integer $n$ the following are equivalent:
  \begin{itemize}
    \item [(i)] $\xi\mbox{-}\mathcal{G}{\rm id}N\leq n$.
    \item [(ii)] The map $\widetilde{\varepsilon}^i_{\mathcal{I}}(M,N):\xi{\rm xt}^i_{\xi}(M,N)\to \widetilde{\xi{\rm xt}}^i_{\mathcal{I}}(M,N)$ is bijective for any $i\geq n+1$ and any $M\in\mathcal{C}$.
  \end{itemize}
  \item [(2)] If $\xi\mbox{-}{\rm id}N<\infty$, then $\widetilde{\xi{\rm xt}}^i_{\mathcal{I}}(N,-)=0$ for any $i\in\mathbb{Z}$.
  \item [(3)] If $\xi\mbox{-}{\rm id}N<\infty$, then $\widetilde{\xi{\rm xt}}^i_{\mathcal{I}}(-,N)=0$ for any $i\in\mathbb{Z}$.
  \item [(4)] If $\xi\mbox{-}{\rm pd}M<\infty$, then $\widetilde{\xi{\rm xt}}^i_{\mathcal{I}}(M,-)=0$ for any $i\in\mathbb{Z}$.
\end{itemize}
\end{prop}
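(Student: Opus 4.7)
The plan is to dualize the entire proof of Proposition \ref{prop3.5}, replacing $\xi$-projective resolutions by $\xi$-injective coresolutions and $\xi$-complete resolutions by $\xi$-complete coresolutions. The backbone of the dualization is the identification
$$
\widetilde{\xi{\rm xt}}_{\mathcal{I}}^n(M,N)\cong H^n(\mathcal{C}(M,\mathbf{Q}))
$$
recorded just above the statement, which plays the role of Lemma \ref{lem:complete-cohomology} on the injective side and is valid whenever $N$ admits a $\xi$-complete coresolution $\xymatrix@C=2em{N\ar[r]^{\iota}&\mathbf{I}\ar[r]^{\mu}&\mathbf{Q}}$.

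For part (1) I would first handle (i)\,$\Rightarrow$\,(ii). Given $\xi\mbox{-}\mathcal{G}{\rm id}N\le n$, one can build a $\xi$-complete coresolution in which $\mu$ is an isomorphism in sufficiently low homological degrees, so that $H^i\mathcal{C}(M,\mu)$ is bijective for every $i\ge n+1$ and every $M\in\mathcal{C}$; this is the dual of the construction used in the proof of Proposition \ref{prop3.5}(1). For (ii)\,$\Rightarrow$\,(i), I would specialize the hypothesis to objects $I\in\mathcal{I}(\xi)$: since every complete $\xi$-injective coresolution is by definition $\mathcal{C}(\mathcal{I}(\xi),-)$-exact, the group $\widetilde{\xi{\rm xt}}^i_{\mathcal{I}}(I,N)$ vanishes for each $I\in\mathcal{I}(\xi)$, so $\xi{\rm xt}^i_{\xi}(I,N)=0$ for all $i\ge n+1$ and all $I\in\mathcal{I}(\xi)$, and the dual of \cite[Theorem 3.8]{HZZ1} then forces $\xi\mbox{-}\mathcal{G}{\rm id}N\le n$.

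For parts (2) and (4), I would invoke the dual of Lemma \ref{3.4}: whenever $\mathbf{Q}$ is a complete $\xi$-injective coresolution and $M\in\widetilde{\mathcal{P}}(\xi)\cup\widetilde{\mathcal{I}}(\xi)$, the complex $\mathcal{C}(M,\mathbf{Q})$ is acyclic. Combined with the identification above, applied to a $\xi$-complete coresolution of the second argument, this immediately yields the vanishing of $\widetilde{\xi{\rm xt}}^i_{\mathcal{I}}(N,-)$ when $\xi\mbox{-}{\rm id}N<\infty$ in part (2) and of $\widetilde{\xi{\rm xt}}^i_{\mathcal{I}}(M,-)$ when $\xi\mbox{-}{\rm pd}M<\infty$ in part (4). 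For part (3), I would instead observe that $\xi\mbox{-}{\rm id}N=k<\infty$ allows us to extend a finite $\xi$-injective coresolution of $N$ to a $\xi$-complete coresolution $\xymatrix@C=2em{N\ar[r]&\mathbf{I}\ar[r]^{0}&\mathbf{0}}$ whose complete part is the zero complex, so that $\widetilde{\xi{\rm xt}}^i_{\mathcal{I}}(-,N)\cong H^i(\mathcal{C}(-,\mathbf{0}))=0$ for every $i\in\mathbb{Z}$.

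The only genuine obstacle is verifying the dual of Lemma \ref{3.4} used in parts (2) and (4), but I expect it to be routine. For $M\in\mathcal{P}(\xi)$ the functor $\mathcal{C}(M,-)$ sends every $\mathbb{E}$-triangle in $\xi$ to a short exact sequence and therefore preserves the exactness of the $\xi$-exact complex $\mathbf{Q}$; for $M\in\mathcal{I}(\xi)$ the acyclicity is built into the definition of a complete $\xi$-injective coresolution. The general case of finite $\xi\mbox{-}{\rm pd}$ or finite $\xi\mbox{-}{\rm id}$ then follows by a straightforward induction on the dimension, shifting one $\mathbb{E}$-triangle of $\xi$ at a time, entirely parallel to the use of \cite[Lemma 5.3]{HZZ} and \cite[Lemma 4.5]{HZZ1} in the proof of Lemma \ref{3.4}.
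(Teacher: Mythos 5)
Your proposal is correct and matches the paper's intent exactly: the paper gives no explicit proof of this proposition, stating only that ``all arguments and proofs are similar to the above,'' i.e.\ one dualizes Proposition \ref{prop3.5} and Lemma \ref{3.4} using $\xi$-complete coresolutions and the identification $\widetilde{\xi{\rm xt}}_{\mathcal{I}}^n(M,N)\cong H^n(\mathcal{C}(M,\mathbf{Q}))$, which is precisely what you do. Your part-by-part correspondence (specializing to $I\in\mathcal{I}(\xi)$ for (ii)$\Rightarrow$(i), the zero complete part for (3), and the dual of Lemma \ref{3.4} for (2) and (4)) is the intended argument.
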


\begin{thm}\label{thm3.11}
\begin{itemize}
\item[ ]
  \item [(1)] Let $N\in\widetilde{\mathcal{GI}}(\xi)$ and consider an $\mathbb{E}$-triangle $\xymatrix{A\ar[r]^x& B\ar[r]^y& C \ar@{-->}[r]^{\delta}& }$ in $\xi$.
Then there exists a long exact sequence
$$
\xymatrix@C=1cm{\cdots\ar[r]&\widetilde{\xi{\rm xt}}^n_{\mathcal{I}}(C,N)\ar[r]^{\widetilde{\xi{\rm xt}}^i_{\mathcal{I}}(y,N)}&\widetilde{\xi{\rm xt}}^n_{\mathcal{I}}(B,N)\ar[r]^{\widetilde{\xi{\rm xt}}^i_{\mathcal{I}}(x,N)}& \widetilde{\xi{\rm xt}}^n_{\mathcal{I}}(A,N)\ar[r] &\widetilde{\xi{\rm xt}}^{n+1}_{\mathcal{I}}(C,N)\ar[r]&\cdots.}
$$
  \item [(2)] Let $\xymatrix{A\ar[r]^x& B\ar[r]^y& C \ar@{-->}[r]^{\delta}& }$ be an $\mathbb{E}$-triangle in $\xi$ with $A,C\in\widetilde{\mathcal{GI}}(\xi)$. For each $M\in\mathcal{C}$,
 there exists a long exact sequence
$$
\xymatrix@C=1cm{\cdots\ar[r]&\widetilde{\xi{\rm xt}}^n_{\mathcal{I}}(M,A)\ar[r]^{\widetilde{\xi{\rm xt}}^i_{\mathcal{I}}(M,x)}&\widetilde{\xi{\rm xt}}^n_{\mathcal{I}}(M,B)\ar[r]^{\widetilde{\xi{\rm xt}}^i_{\mathcal{I}}(M,y)}& \widetilde{\xi{\rm xt}}^n_{\mathcal{I}}(M,C)\ar[r] &\widetilde{\xi{\rm xt}}^{n+1}_{\mathcal{I}}(M,A)\ar[r]&\cdots.}
$$
\end{itemize}
\end{thm}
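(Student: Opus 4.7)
The plan is to dualize the arguments used for Theorems \ref{thm3.6} and \ref{thm:3.11'}, replacing $\xi$-complete resolutions by $\xi$-complete coresolutions and the role of $\xi$-projectives by $\xi$-injectives. Throughout, I will use the isomorphism $\widetilde{\xi{\rm xt}}_{\mathcal{I}}^n(M,N)\cong H^n(\mathcal{C}(M,\mathbf{Q}))$ whenever $N$ admits a $\xi$-complete coresolution $\xymatrix@C=1.5em{N\ar[r]^{\iota}&\mathbf{I}\ar[r]^{\mu}&\mathbf{Q}}$, which is the dual of Lemma \ref{lem:complete-cohomology}.

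For part (1), since $N\in\widetilde{\mathcal{GI}}(\xi)$, fix a $\xi$-complete coresolution $\xymatrix@C=1.5em{N\ar[r]^{\iota}&\mathbf{I}\ar[r]^{\mu}&\mathbf{Q}}$. Applying the bifunctor $\mathcal{C}(-,\mathbf{Q})$ degreewise to the $\mathbb{E}$-triangle $\xymatrix{A\ar[r]^x&B\ar[r]^y&C\ar@{-->}[r]^{\delta}&}$ in $\xi$ yields a sequence of complexes
\[
\xymatrix@C=1.3em{0\ar[r]&\mathcal{C}(C,\mathbf{Q})\ar[r]&\mathcal{C}(B,\mathbf{Q})\ar[r]&\mathcal{C}(A,\mathbf{Q})\ar[r]&0,}
\]
which is short exact because each $Q_i$ is $\xi$-injective and the $\mathbb{E}$-triangle lies in $\xi$. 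Taking cohomology and invoking the dual of Lemma \ref{lem:complete-cohomology} produces the desired long exact sequence. (One may also compare with the analogous short exact sequence $0\to \mathcal{C}(C,\mathbf{I})\to \mathcal{C}(B,\mathbf{I})\to \mathcal{C}(A,\mathbf{I})\to 0$ to obtain commutativity with the comparison morphisms $\widetilde{\varepsilon}^{n}_{\mathcal{I}}$, which is the direct dual of the last diagram in the proof of Theorem \ref{thm3.6}.)

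For part (2), the main ingredient will be a \emph{Horseshoe Lemma for $\xi$-complete coresolutions}, the dual of Lemma \ref{com-homotopy2}: given an $\mathbb{E}$-triangle $\xymatrix{A\ar[r]^x&B\ar[r]^y&C\ar@{-->}[r]&}$ in $\xi$ with $A,C\in\widetilde{\mathcal{GI}}(\xi)$ and chosen $\xi$-complete coresolutions of $A$ and $C$, one can construct a $\xi$-complete coresolution of $B$ fitting into a commutative diagram whose rows (at the $\mathbf{I}$-level and at the $\mathbf{Q}$-level) are split $\mathbb{E}$-triangles in $\xi$. The construction mirrors the argument for Lemma \ref{com-homotopy2}: first lift the given triangle to a $\xi$-injective coresolution of $B$ (dual of \cite[Lemma 3.3]{HZZ1}), then choose $n$ so large that the $n$-th $\xi$-cosyzygies $K_n^A, K_n^B, K_n^C$ of $A,B,C$ sit in an $\mathbb{E}$-triangle with $K_n^A,K_n^C$ being $\xi$-$\mathcal{G}$injective (so $K_n^B$ is too by the dual of \cite[Proposition 5.2]{HZZ}), and finally splice with a complete $\xi$-injective coresolution of $K_n^B$ built from those of $K_n^A$ and $K_n^C$ (dual of \cite[Theorem 4.16]{HZZ}).

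With this diagram in hand, applying $\mathcal{C}(M,-)$ gives a short exact sequence of complexes
\[
\xymatrix@C=1.2em{0\ar[r]&\mathcal{C}(M,\mathbf{Q}_A)\ar[r]&\mathcal{C}(M,\mathbf{Q}_B)\ar[r]&\mathcal{C}(M,\mathbf{Q}_C)\ar[r]&0}
\]
(exact because the $\mathbf{Q}$-row of the horseshoe diagram is a split $\mathbb{E}$-triangle), whose long exact cohomology sequence, combined again with the dual of Lemma \ref{lem:complete-cohomology}, yields the stated long exact sequence. The main obstacle is the correct formulation and verification of the dual Horseshoe Lemma for $\xi$-complete coresolutions and the argument that the constructed $\mathbf{Q}_B$ is indeed a complete $\xi$-injective coresolution of $K_n^B$; once this is in place, naturality in $M$ and in the $\mathbb{E}$-triangle as well as compatibility with $\widetilde{\varepsilon}^{n}_{\mathcal{I}}$ follow exactly as in Theorems \ref{thm3.6} and \ref{thm:3.11'}.
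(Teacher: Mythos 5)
Your proposal is correct and is exactly what the paper intends: the paper gives no separate proof of this theorem, stating only that ``all arguments and proofs are similar to the above,'' i.e.\ one dualizes Theorems \ref{thm3.6} and \ref{thm:3.11'} by replacing $\xi$-complete resolutions with $\xi$-complete coresolutions, which is precisely what you carry out (including the dual Horseshoe Lemma for part (2)). No gaps; the only caveat is that the dual Horseshoe Lemma and the dual of Lemma \ref{lem:complete-cohomology} should be stated explicitly if one wants a fully self-contained argument, but their proofs are the evident duals as you indicate.
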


\section{\bf The balance of $\xi$-complete cohomology}
Our goal in this section is to study the balance of $\xi$-complete cohomology. At first, we recall the following fact taking from \cite{RL}.
\begin{lem}\label{lem4.1} \emph{{\rm (}see \cite[Lemma 3.3]{RL}{\rm)}}
Given a commutative diagram
$$
\xymatrix@=0.7cm{&\vdots\ar[d]&\vdots\ar[d]&\vdots\ar[d]\\
\cdots\ar[r]&M_{2,2}\ar[r]^{d_{2,2}}\ar[d]^{e_{2,2}}&M_{1,2}\ar[r]^{d_{1,2}}\ar[d]^{e_{1,2}}&M_{0,2}\ar[d]^{e_{0,2}}\\
\cdots\ar[r]&M_{2,1}\ar[r]^{d_{2,1}}\ar[d]^{e_{2,1}}&M_{1,1}\ar[r]^{d_{1,1}}\ar[d]^{e_{1,1}}&M_{0,1}\ar[d]^{e_{0,1}}\\
\cdots\ar[r]&M_{2,0}\ar[r]^{d_{2,0}}&M_{1,0}\ar[r]^{d_{1,0}}&M_{0,0}
}
$$
in $\textrm{Ab}$ with all rows and columns exact. Then there are two complexes
$$
\mathbf{C}: \cdots\to {\rm Coker}d_{1,2}\to {\rm Coker}d_{1,1}\to {\rm Coker}d_{1,0}\to 0
$$
and
$$
\mathbf{D}: \cdots\to {\rm Coker}e_{2,1}\to {\rm Coker}e_{1,1}\to {\rm Coker}e_{0,1}\to 0
$$
with $H^n(\mathbf{C})=H^n(\mathbf{D})$ for all $n$.
\end{lem}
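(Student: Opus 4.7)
The plan is to recognize the given array as a third-quadrant bicomplex $(M_{i,j}, d, e)$ in $\textrm{Ab}$, with horizontal differential $d$ decreasing the first index and vertical differential $e$ decreasing the second. I would then augment it on two sides by the cokernels: setting $\mathbf{C}_j = \coker d_{1,j}$ extends each row to an exact sequence $\cdots \to M_{1,j} \to M_{0,j} \to \mathbf{C}_j \to 0$, and setting $\mathbf{D}_i = \coker e_{i,1}$ extends each column symmetrically. The induced maps between cokernels give $\mathbf{C}$ and $\mathbf{D}$ their claimed chain-complex structures, and the hypothesis now reads as: every augmented row and every augmented column is exact.

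The heart of the argument is then a standard bicomplex yoga. I would form the total complex $\operatorname{Tot}(M)$ with the usual sign twists and consider the two spectral sequences associated to the row and column filtrations. Exactness of each augmented row collapses the row-filtration spectral sequence to a single edge, yielding a quasi-isomorphism between $\operatorname{Tot}(M)$ and a shift of $\mathbf{C}$; by the symmetric argument using exactness of each augmented column, $\operatorname{Tot}(M)$ is also quasi-isomorphic to the same shift of $\mathbf{D}$. Composing the two quasi-isomorphisms delivers the desired identification $H^n(\mathbf{C}) \cong H^n(\mathbf{D})$ for every $n$.

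If one prefers to avoid spectral sequences, an equivalent elementary proof proceeds by a zig-zag staircase chase: given a cocycle represented by $x \in M_{0,n}$ with $e_{0,n}(x) \in \operatorname{im} d_{1,n-1}$, one inductively uses exactness of the rows to produce lifts $y_i \in M_{i,n-i}$ satisfying $d(y_i) = e(y_{i-1})$, terminating at $y_n \in M_{n,0}$ whose class in $\mathbf{D}_n = \coker e_{n,1}$ is the proposed image in $H^n(\mathbf{D})$. One then checks independence of the successive choices (each time the ambiguity lies in $\ker d = \operatorname{im} d$ of the next column, and pushing down by $e$ lands in an exact coboundary), and verifies that modifying $x$ by an element of $\operatorname{im} e_{0,n+1}$ or by $d_{1,n}(M_{1,n})$ alters $y_n$ by a coboundary in $\mathbf{D}$; reversing the role of rows and columns gives a two-sided inverse.

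The main obstacle is purely bookkeeping: aligning indexing and sign conventions so that the shifts in the two spectral-sequence quasi-isomorphisms truly coincide, and verifying that the maps induced on cokernels by the bicomplex structure agree on the nose with the differentials given in the definitions of $\mathbf{C}$ and $\mathbf{D}$. Once the augmentation is in place and the exactness hypotheses are recognized as fulfilling the usual acyclic-assembly requirements, the conclusion is immediate; the real work is verifying that the setup meets the hypotheses, and then matching degrees between the two sides.
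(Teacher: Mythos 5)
Your proposal is correct, but note that the paper itself offers no proof of this lemma at all: it is quoted verbatim from Ren--Liu \cite[Lemma 3.3]{RL} and used as a black box, so there is no internal argument to compare against. What you have written is the standard proof of this ``balance'' lemma. The spectral-sequence version is sound because the array is a first-quadrant bicomplex (both indices bounded below), so the total complex has finite direct sums in each total degree and both filtration spectral sequences converge; exactness of the rows at every interior spot makes the row-wise $E_1$-page concentrate in the column of cokernels $\operatorname{Coker}d_{1,j}$, identifying $H_n(\operatorname{Tot})$ with $H_n(\mathbf{C})$ with no actual shift (and symmetrically for $\mathbf{D}$), so the degree bookkeeping you flag as the main obstacle is in fact harmless. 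The elementary staircase chase you sketch --- lifting $x\in M_{0,n}$ through $y_i\in M_{i,n-i}$ with $d(y_i)=e(y_{i-1})$ down to $y_n\in M_{n,0}$, then checking independence of choices modulo coboundaries --- is the argument one would expect to find in the cited source, and your accounting of where the ambiguities land is accurate. Either route is acceptable; the only thing to add if you wanted a fully self-contained write-up is the explicit verification that the induced maps on cokernels square to zero (immediate from $d^2=0$, $e^2=0$ and commutativity), which you implicitly assume when calling $\mathbf{C}$ and $\mathbf{D}$ complexes.
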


The following lemma is the crucial step in the study of the balance of $\xi$-complete cohomology.

\begin{lem}\label{lem4.2}
Assume that $M$ is $\xi$-$\mathcal{G}$projective with a complete $\xi$-projective resolution $\mathbf{T}$, and $N$ is $\xi$-$\mathcal{G}$injective with a complete $\xi$-injective coresolution $\mathbf{Q}$. Then there are isomorphisms
$$
H^i\mathcal{C}(\mathbf{T},N)\cong H^i\mathcal{C}(M,\mathbf{Q})
$$
for all $i\in\mathbb{Z}$.
\end{lem}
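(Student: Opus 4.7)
The plan is to apply Lemma \ref{lem4.1} to a double complex of abelian groups built from $\mathcal{C}(T_i,Q_j)$, using the $\xi$-projectivity of the $T_i$ and the $\xi$-injectivity of the $Q_j$ to produce exact rows and columns. Since each $T_i$ is $\xi$-projective, $\mathcal{C}(T_i,-)$ is exact on every $\mathbb{E}$-triangle in $\xi$; splicing the defining triangles $K'_{n+1}\to Q_n\to K'_n$ of the complete $\xi$-injective coresolution $\mathbf{Q}$ then shows that the row $\mathcal{C}(T_i,\mathbf{Q})$ is an exact complex of abelian groups. Dually, each column $\mathcal{C}(\mathbf{T},Q_j)$ is exact because $Q_j\in\mathcal{I}(\xi)$ and all splicing $\mathbb{E}$-triangles of $\mathbf{T}$ lie in $\xi$.

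Next, I would locate $M$ as a syzygy of $\mathbf{T}$, say $M=K_0$ fitting into an $\mathbb{E}$-triangle $M\to T_{-1}\to K_{-1}$ in $\xi$, and similarly $N=K'_0$ in $\mathbf{Q}$. The $\xi$-injectivity of $Q_j$ gives a short exact sequence $0\to\mathcal{C}(K_{-1},Q_j)\to\mathcal{C}(T_{-1},Q_j)\to\mathcal{C}(M,Q_j)\to 0$, which together with exactness of the row at $\mathcal{C}(T_{-1},Q_j)$ identifies $\mathcal{C}(M,Q_j)$ with the cokernel of $\mathcal{C}(T_{-2},Q_j)\to\mathcal{C}(T_{-1},Q_j)$. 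A dual computation identifies $\mathcal{C}(T_{-(i+1)},N)$ with the cokernel of the corresponding vertical map. Setting $M_{i,j}=\mathcal{C}(T_{-(i+1)},Q_j)$ for $i,j\geq 0$, Lemma \ref{lem4.1} then yields an isomorphism in each degree between the cohomologies of the complexes $\cdots\to\mathcal{C}(M,Q_1)\to\mathcal{C}(M,Q_0)$ and $\cdots\to\mathcal{C}(T_{-2},N)\to\mathcal{C}(T_{-1},N)$.

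The main obstacle I anticipate is extending this from a bounded range of degrees to all $i\in\mathbb{Z}$: a single first-quadrant truncation only supplies the isomorphism for $n$ in a certain range, whereas $\mathbf{T}$ and $\mathbf{Q}$ are two-sided infinite. I would handle this by iterating the argument after replacing $M$ by its $k$-th $\xi$-syzygy (and shifting $\mathbf{T}$ accordingly) for every $k\in\mathbb{Z}$, and then invoking the standard dimension-shifting identifications of $\xi$-complete cohomology under syzygy replacement (which follows from Lemma \ref{lem:complete-cohomology} and its dual, together with the independence of $\widetilde{\xi{\rm xt}}^n_{\mathcal{P}}$ and $\widetilde{\xi{\rm xt}}^n_{\mathcal{I}}$ from the choice of resolutions) to piece together a consistent isomorphism $H^i\mathcal{C}(\mathbf{T},N)\cong H^i\mathcal{C}(M,\mathbf{Q})$ for every integer $i$.
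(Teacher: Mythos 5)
Your double-complex step coincides with the paper's treatment of the range $i\le -2$: the paper applies Lemma \ref{lem4.1} to the diagram built from $\mathbf{T}_{\preceq 0}\colon 0\to M\to T_{-1}\to T_{-2}\to\cdots$ and $\mathbf{Q}_{\succeq 0}\colon\cdots\to Q_2\to Q_1\to N\to 0$, exactly as you describe, identifying $\mathcal{C}(M,Q_j)$ and $\mathcal{C}(T_{-j},N)$ as cokernels. Where you genuinely diverge is in the remaining degrees. The paper handles $i\ge 1$ via the comparison maps: since $\xi\mbox{-}\mathcal{G}{\rm pd}M=0$ and $\xi\mbox{-}\mathcal{G}{\rm id}N=0$, Propositions \ref{prop3.5}(1) and \ref{prop3.10}(1) give $\widetilde{\xi{\rm xt}}^i_{\mathcal{P}}(M,N)\cong\xi{\rm xt}^i_{\xi}(M,N)\cong\widetilde{\xi{\rm xt}}^i_{\mathcal{I}}(M,N)$ there, and it then settles the two leftover degrees $i=0,-1$ by an explicit element-level construction of isomorphisms $\Phi$ and $\Psi$ at the corner of the double complex. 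You instead reach every degree by rerunning the double-complex argument for the $k$-th $\xi$-syzygy $K_k$ of $M$ along $\mathbf{T}$ (only $k\ge 0$ is needed: for a fixed $i$ take $k\ge i+2$) and shifting back. This is a legitimate and arguably cleaner route that avoids both the comparison-map case and the hands-on computation in degrees $0$ and $-1$, at the price of leaning on the long exact sequences of Section 3.

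The one step you have not justified correctly is the shift $\widetilde{\xi{\rm xt}}^{\,i-k}_{\mathcal{I}}(K_k,N)\cong\widetilde{\xi{\rm xt}}^{\,i}_{\mathcal{I}}(M,N)$. On the $\mathcal{P}$-side the analogous shift is indeed immediate from Lemma \ref{lem:complete-cohomology}, because $\mathbf{T}[-k]$ together with the truncation of $\mathbf{T}$ is a $\xi$-complete resolution of $K_k$. But on the $\mathcal{I}$-side, $K_k$ is a syzygy with respect to a $\xi$-\emph{projective} resolution, so the dual of Lemma \ref{lem:complete-cohomology} and the independence of $\widetilde{\xi{\rm xt}}_{\mathcal{I}}$ from the choice of coresolutions say nothing about how $\widetilde{\xi{\rm xt}}_{\mathcal{I}}(K_k,N)$ relates to $\widetilde{\xi{\rm xt}}_{\mathcal{I}}(M,N)$; those results only compute each group separately from a complete coresolution of $N$. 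What you actually need is the long exact sequence in the first variable for $\widetilde{\xi{\rm xt}}_{\mathcal{I}}(-,N)$ (Theorem \ref{thm3.11}(1), applicable since $N\in\widetilde{\mathcal{GI}}(\xi)$) applied to the $\mathbb{E}$-triangles $K_{j+1}\to T_j\to K_j$, combined with the vanishing $\widetilde{\xi{\rm xt}}^{\,*}_{\mathcal{I}}(T_j,N)=0$ from Proposition \ref{prop3.10}(4). With that substitution your argument closes.
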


\begin{proof}
Consider the following truncated complexes
$$
\mathbf{T}_{\geq 0}: \cdots\to T_2\to T_1\to T_0\to 0
$$
and
$$
\mathbf{Q}_{\leq 0}: 0\to Q_0 \to Q_{-1}\to Q_{-2}\to \cdots.
$$
It is easy to see that $\xymatrix@C=0.5cm{\mathbf{T}\ar[r]&\mathbf{T}_{\geq 0}\ar[r]&M}$ is a $\xi$-complete resolution of $M$, and $\xymatrix@C=0.5cm{N\ar[r]&\mathbf{Q}_{\leq 0}\ar[r]&\mathbf{Q}}$ is a $\xi$-complete coresolution of $N$.  By Propositions \ref{prop3.5} and \ref{prop3.10},
$$
H^i\mathcal{C}(\mathbf{T},N)=\widetilde{\xi{\rm xt}}^i_{\mathcal{P}}(M,N)\cong {\xi{\rm xt}}^i_{\xi}(M,N)\cong\widetilde{\xi{\rm xt}}^i_{\mathcal{I}}(M,N)= H^i\mathcal{C}(M,\mathbf{Q})
$$
for any $i\geq 1$.

Now consider the following $\xi$-exact complexes
$$
\mathbf{T}_{\preceq 0}: 0\to M\to T_{-1}\to T_{-2}\to \cdots
$$
and
$$
\mathbf{Q}_{\succeq 0}: \cdots\to Q_2\to Q_1\to N\to 0.
$$
We have the following commutative diagram
$$
\xymatrix@=0.5cm{
&\vdots\ar[d]&\vdots\ar[d]&\vdots\ar[d]&\\
\cdots\ar[r]&\mathcal{C}(T_{-2},Q_2)\ar[r]\ar[d]&\mathcal{C}(T_{-2},Q_1)\ar[r]\ar[d]&\mathcal{C}(T_{-2},N)\ar[r]\ar[d]&0\\
\cdots\ar[r]&\mathcal{C}(T_{-1},Q_2)\ar[r]\ar[d]&\mathcal{C}(T_{-1},Q_1)\ar[r]\ar[d]&\mathcal{C}(T_{-1},N)\ar[r]\ar[d]&0\\
\cdots\ar[r]&\mathcal{C}(M,Q_2)\ar[r]\ar[d]&\mathcal{C}(M,Q_1)\ar[r]\ar[d]&\mathcal{C}(M,N)\ar[r]\ar[d]&0\\
&0&0&\ 0.&
}
$$
Since $T_{-i}\in\mathcal{P}(\xi)$ and $Q_{i}\in\mathcal{I}(\xi)$ for any $i\geq 1$, all rows and columns are exact except the bottom row $\mathcal{C}(M,\mathbf{Q}_{\succeq 0})$ and the far right column $\mathcal{C}(\mathbf{T}_{\preceq 0},N)$. By Lemma \ref{lem4.1}, the induced complexes
$$
\mathcal{C}(M,\mathbf{Q}_{\geq 1}): \cdots\to\mathcal{C}(M,Q_3)\to\mathcal{C}(M,Q_2)\to\mathcal{C}(M,Q_1)\to 0
$$
and
$$
\mathcal{C}(\mathbf{T}_{\leq -1},N): \cdots\to\mathcal{C}(T_{-3},N)\to\mathcal{C}(T_{-2},N)\to\mathcal{C}(T_{-1},N)\to 0
$$
have isomorphic cohomology groups, that is, for any $i\leq -2$,
$$
H^i\mathcal{C}(\mathbf{T},N)=H^i\mathcal{C}(\mathbf{T}_{\leq -1},N)=H^i\mathcal{C}(M,\mathbf{Q}_{\geq 1})= H^i\mathcal{C}(M,\mathbf{Q}).
$$

Note that for the complete $\xi$-injective coresolution $\mathbf{Q}: \xymatrix@C=0.5cm{\cdots\ar[r]& Q_2\ar[r]^{d_2^{\mathbf{Q}}}& Q_1\ar[r]^{d_1^{\mathbf{Q}}}& Q_0\ar[r]& \cdots}$, there are
$\mathbb{E}$-triangles $\xymatrix@C=0.5cm{L_{i+1}\ar[r]^{s_i}& Q_i\ar[r]^{t_i}& L_i \ar@{-->}[r]^{\delta_i}& }$ (Here $L_1=N$) in $\xi$ such that $d_i^{\mathbf{Q}}=s_{i-1}t_i$ for all $i\in\mathbb{Z}$. Now consider the following commutative diagram
\begin{equation}\label{co}
\begin{split}
\xymatrix@C=0.5cm{
\mathcal{C}(T_{-1},Q_2)\oplus\mathcal{C}(T_{-2},Q_1)\ar[r]^-{\rho_2}\ar[d]_{(0,\mathcal{C}(T_{-2},t_1))}&\mathcal{C}(T_{-1},Q_1)\ar[r]^-{\rho_1}\ar[d]^{\mathcal{C}(T_{-1},t_1)}
&\mathcal{C}(T_0,Q_0)\ar[r]^-{\rho_0}&
\mathcal{C}(T_1,Q_0)\oplus\mathcal{C}(T_0,Q_{-1})\\
\mathcal{C}(T_{-2},N)\ar[r]^{\mathcal{C}(d_{-1}^{\mathbf{T}},N)}&\mathcal{C}(T_{-1},N)\ar[r]^{\mathcal{C}(d_{0}^{\mathbf{T}},N)}&
\mathcal{C}(T_0,N)\ar[u]_{\mathcal{C}(T_0,s_0)}\ar[r]^{\mathcal{C}(d_{1}^{\mathbf{T}},N)}&\mathcal{C}(T_1,N)\ar[u]_{\mathcal{C}(T_1,s_0)\choose 0}
}
\end{split}
\end{equation}
where
\begin{align*}
  \rho_2: &\ \ \mathcal{C}(T_{-1},Q_2)\oplus\mathcal{C}(T_{-2},Q_1)\to\mathcal{C}(T_{-1},Q_1)\\
   & \ \ \ \ \ \ \ \ \ \ \  \alpha:=(\alpha_{-1},\alpha_{-2}) \  \mapsto \ d_2^{\mathbf{Q}}\alpha_{-1}+\alpha_{-2}d_{-1}^{\mathbf{T}}\\
  \rho_1: &\ \ \mathcal{C}(T_{-1},Q_1)\to \mathcal{C}(T_0,Q_0)\\
  & \ \ \ \ \ \ \ \ \ \ \ \
   \beta \ \mapsto \ \ d_1^{\mathbf{Q}}\beta d_{0}^{\mathbf{T}}\\
 \rho_0: & \ \ \mathcal{C}(T_0,Q_0)\to \mathcal{C}(T_1,Q_0)\oplus\mathcal{C}(T_0,Q_{-1}). \\
   & \ \ \ \ \ \ \ \ \ \ \  \gamma \ \mapsto \ (\gamma d_{1}^{\mathbf{T}}, d_0^{\mathbf{Q}}\gamma)
\end{align*}
Define
\begin{align*}
  \Phi: & \ \ {\rm Ker}\rho_1/{\rm Im} \rho_2 \to {\rm Ker}\mathcal{C}(d_0^{\mathbf{T}},N)/{\rm Im}\mathcal{C}(d_{-1}^{\mathbf{T}},N).  \\
   & \ \ \ \ \ \alpha+{\rm Im}\rho_2 \ \mapsto \ t_1\alpha+{\rm Im}\mathcal{C}(d_{-1}^{\mathbf{T}},N)
\end{align*}
We first show that it is a well-defined map. Indeed, let $\alpha\in{\rm Ker}\rho_1$. Then
$$0=\rho_1(\alpha)=d_1^{\mathbf{Q}}\alpha d_{0}^{\mathbf{T}}=s_0t_1\alpha d_{0}^{\mathbf{T}}=\mathcal{C}(T_0,s_0)(t_1\alpha d_{0}^{\mathbf{T}}).$$
Since $T_0\in\mathcal{P}(\xi)$, $\mathcal{C}(T_0,s_0)$ is injective, and hence $\mathcal{C}(d_{0}^{\mathbf{T}},N)(t_1\alpha)=t_1\alpha d_{0}^{\mathbf{T}}=0$. Thus $t_1\alpha\in{\rm Ker}\mathcal{C}(d_{0}^{\mathbf{T}},N)$. Moreover, if $\beta\in {\rm Im} \rho_2$, then $t_1 \beta\in{\rm Im}\mathcal{C}(d_{-1}^{\mathbf{T}},N)$ by the commutativity of left square of (\ref{co}). This shows that $\Phi$ is a well-defined map.

We next show that $\Phi$ is an isomorphism. Firstly, since $T_{-1}\in\mathcal{P}(\xi)$, there is an exact sequence
 $$
 \xymatrix{0\ar[r]&\mathcal{C}(T_{-1},L_2)\ar[r]^{\mathcal{C}(T_{-1},s_1)}&\mathcal{C}(T_{-1},Q_1)\ar[r]^{\mathcal{C}(T_{-1},t_1)}&
 \mathcal{C}(T_{-1},N)\ar[r]&0.
}
 $$
In particular, the induced map $\mathcal{C}(T_{-1},t_1)$ is surjective, and following this we easily get  that $\Phi$ is surjective. Now assume that $\Phi(\alpha+{\rm Im}\rho_2)=0$, that is, $t_1\alpha\in{\rm Im}\mathcal{C}(d_{-1}^{\mathbf{T}},N)$. Then there is $\beta\in\mathcal{C}(T_{-2},N)$ with $t_1\alpha=\mathcal{C}(d_{-1}^{\mathbf{T}},N)(\beta)$. Similarly, since $T_{-2}\in\mathcal{P}(\xi)$, the induced map $\mathcal{C}(T_{-2},t_1)$ is surjective, and hence there is $\gamma:=(\gamma_{-1},\gamma_{-2})\in\mathcal{C}(T_{-1},Q_2)\oplus\mathcal{C}(T_{-2},Q_1)$ with $(0,\mathcal{C}(T_{-2},t_1))(\gamma)=\beta$.
By the commutativity of left square of (\ref{co}), $\mathcal{C}(T_{-1},t_1)(\alpha)=t_1\alpha=\mathcal{C}(d_{-1}^{\mathbf{T}},N)(\beta)=
\mathcal{C}(d_{-1}^{\mathbf{T}},N)((0,\mathcal{C}(T_{-2},t_1))(\gamma))=\mathcal{C}(T_{-1},t_1)(\rho_2(\gamma))$,
which shows that $\mathcal{C}(T_{-1},t_1)(\alpha-\rho_2(\gamma))=0$, that is, $\alpha-\rho_2(\gamma)\in{\rm Ker}\mathcal{C}(T_{-1},t_1)={\rm Im}\mathcal{C}(T_{-1},s_1)$. Thus there exists $\delta\in \mathcal{C}(T_{-1},L_2)$ such that $\alpha-\rho_2(\gamma)=\mathcal{C}(T_{-1},s_1)(\delta)$. By the surjectivity of $\mathcal{C}(T_{-1},t_2)$, there is $\epsilon\in\mathcal{C}(T_{-1},Q_2)$ with $\delta=\mathcal{C}(T_{-1},t_2)(\epsilon)$. It follows that
\begin{align*}
  \alpha & =\rho_2(\gamma)+s_1\delta= d_2^{\mathbf{Q}}\gamma_{-1}+\gamma_{-2}d_{-1}^{\mathbf{T}}+s_1 t_1 \epsilon \\
   & =d_2^{\mathbf{Q}}\gamma_{-1}+\gamma_{-2}d_{-1}^{\mathbf{T}}+d_2^{\mathbf{Q}}\epsilon=d_2^{\mathbf{Q}}(\gamma_{-1}+\epsilon)+\gamma_{-2}d_{-1}^{\mathbf{T}}\\
&=\rho_2(\gamma_{-1}+\epsilon,\gamma_{-2})
\end{align*}
which means that $\alpha\in{\rm Im}\rho_2$. This shows that $\Phi$ is injective, and hence $\Phi$ is an isomorphism.

Similarly, we can show that the following map
\begin{align*}
  \Psi: & \ \ {\rm Ker}\mathcal{C}(d_1^{\mathbf{T}},N)/{\rm Im}\mathcal{C}(d_0^{\mathbf{T}},N)\to {\rm Ker}\rho_0/{\rm Im}\rho_1 \\
   & \ \ \ \ \ \ \ \ \ \  \alpha+{\rm Im}\mathcal{C}(d_0^{\mathbf{T}},N) \ \mapsto \ s_0\alpha+{\rm Im}\rho_1
\end{align*}
is an isomorphism.

On the other hand, we can prove that there is a commutative diagram as follows
\begin{equation}\label{co2}
\begin{split}
\xymatrix@C=0.5cm{
\mathcal{C}(T_{-1},Q_2)\oplus\mathcal{C}(T_{-2},Q_1)\ar[r]^-{\rho_2}\ar[d]&\mathcal{C}(T_{-1},Q_1)\ar[r]^-{\rho_1}\ar[d]
&\mathcal{C}(T_0,Q_0)\ar[r]^-{\rho_0}&
\mathcal{C}(T_1,Q_0)\oplus\mathcal{C}(T_0,Q_{-1})\\
\mathcal{C}(M,Q_2)\ar[r]&\mathcal{C}(M,Q_1)\ar[r]&
\mathcal{C}(M,Q_0)\ar[u]\ar[r]&\mathcal{C}(M,Q_{-1})\ar[u]
}
\end{split}
\end{equation}
and the two rows have isomorphic cohomological groups. It follows that the bottom row of (\ref{co}) and the bottom row of (\ref{co2}) have isomorphic cohomological groups, that is,
$H^i\mathcal{C}(\mathbf{T},N)\cong H^i\mathcal{C}(M,\mathbf{Q})
$ for $i=0,-1$.
\end{proof}

More generally, we have the balance of $\xi$-complete cohomology as follows, which is the key result to prove the main result, and meanwhile generalizes \cite[Theorem 4.11]{AS2}, \cite[Theorem 2]{Ia} and \cite[Main Theorem]{RL}.

\begin{prop}\label{prop:key-result}
Let $M\in\widetilde{\mathcal{GP}}(\xi)$ and $N\in\widetilde{\mathcal{GI}}(\xi)$. Then
$$
\widetilde{\xi{\rm xt}}^i_{\mathcal{P}}(M,N)\cong \widetilde{\xi{\rm xt}}^i_{\mathcal{I}}(M,N)
$$
for any $i\in\mathbb{Z}$.
\end{prop}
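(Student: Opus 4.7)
The plan is to prove the isomorphism by a double dimension-shifting induction on the pair $(\xi\mbox{-}\mathcal{G}{\rm pd}M,\ \xi\mbox{-}\mathcal{G}{\rm id}N)$, using Lemma~\ref{lem4.2} as the base case and the long exact sequences of Theorems~\ref{thm3.6}, \ref{thm:3.11'} and \ref{thm3.11} to reduce finite-dimensional objects to $\xi$-$\mathcal{G}$projective (resp. $\xi$-$\mathcal{G}$injective) ones. The vanishing statements in Propositions~\ref{prop3.5} and \ref{prop3.10} are precisely what lets the middle terms of these sequences disappear, so that the outer isomorphisms become clean dimension shifts that propagate the isomorphism from the base case.

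For the induction in the second variable I would fix $M \in \widetilde{\mathcal{GP}}(\xi)$ and induct on $m := \xi\mbox{-}\mathcal{G}{\rm id}N$. The case $m = 0$ is handled by a parallel induction on $\xi\mbox{-}\mathcal{G}{\rm pd}M$, whose base case $M \in \mathcal{GP}(\xi)$, $N \in \mathcal{GI}(\xi)$ is exactly Lemma~\ref{lem4.2}. For the step $m \geq 1$, pick an $\mathbb{E}$-triangle $\mathfrak{E}: N \to I \to N' \dashrightarrow$ in $\xi$ with $I \in \mathcal{I}(\xi)$; then $\xi\mbox{-}\mathcal{G}{\rm id}N' = m-1$, so both outer terms lie in $\widetilde{\mathcal{GI}}(\xi)$. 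Theorem~\ref{thm3.6} applied to $M$ and $\mathfrak{E}$ yields the long exact sequence for $\widetilde{\xi{\rm xt}}_{\mathcal{P}}$, and Theorem~\ref{thm3.11}(2) the corresponding one for $\widetilde{\xi{\rm xt}}_{\mathcal{I}}$ (its hypothesis being verified). Since $\xi\mbox{-}{\rm id}I = 0$, Propositions~\ref{prop3.5}(4) and \ref{prop3.10}(3) kill the middle terms and produce isomorphisms $\widetilde{\xi{\rm xt}}^{i-1}_{\mathcal{P}}(M, N') \cong \widetilde{\xi{\rm xt}}^{i}_{\mathcal{P}}(M, N)$ and $\widetilde{\xi{\rm xt}}^{i-1}_{\mathcal{I}}(M, N') \cong \widetilde{\xi{\rm xt}}^{i}_{\mathcal{I}}(M, N)$ for every $i \in \mathbb{Z}$; the inductive hypothesis then transfers the isomorphism from $N'$ back to $N$.

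The parallel induction in the first variable is symmetric: fix $N \in \mathcal{GI}(\xi)$, induct on $n := \xi\mbox{-}\mathcal{G}{\rm pd}M$, with base case $n=0$ again Lemma~\ref{lem4.2}. For $n \geq 1$, choose an $\mathbb{E}$-triangle $\mathfrak{E}: K \to P \to M \dashrightarrow$ in $\xi$ with $P \in \mathcal{P}(\xi)$; then $\xi\mbox{-}\mathcal{G}{\rm pd}K = n-1$, so both $K$ and $M$ belong to $\widetilde{\mathcal{GP}}(\xi)$. Theorem~\ref{thm:3.11'} and Theorem~\ref{thm3.11}(1) now produce the required long exact sequences in the first variable, and $\xi\mbox{-}{\rm pd}P = 0$ together with Propositions~\ref{prop3.5}(2) and \ref{prop3.10}(4) annihilates the middle terms. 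This delivers a dimension shift from $M$ to $K$ that, by induction, yields the desired isomorphism.

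The only real obstacle I anticipate is bookkeeping, namely verifying that each inductive step produces an $\mathbb{E}$-triangle whose outer terms satisfy the hypotheses required to invoke Theorems~\ref{thm:3.11'} and \ref{thm3.11}(2) (both outer terms of finite $\xi$-$\mathcal{G}$projective, respectively $\xi$-$\mathcal{G}$injective, dimension). This is automatic from the standard fact that the $\xi$-$\mathcal{G}$projective (resp. $\xi$-$\mathcal{G}$injective) dimension drops by exactly one on passing to a first $\xi$-syzygy (resp. $\xi$-cosyzygy) of a finite-dimensional object, so no new computational input beyond Lemma~\ref{lem4.2} is needed.
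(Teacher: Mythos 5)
Your proposal is correct and follows essentially the same strategy as the paper: reduce to the case where $M$ is $\xi$-$\mathcal{G}$projective and $N$ is $\xi$-$\mathcal{G}$injective (the content of Lemma \ref{lem4.2}) by dimension shifting, using the long exact sequences of Theorems \ref{thm3.6}, \ref{thm:3.11'}, \ref{thm3.11} together with the vanishing statements of Propositions \ref{prop3.5} and \ref{prop3.10}. The paper merely packages the shifts as a single chain of isomorphisms (performing the first-variable shift for $\widetilde{\xi{\rm xt}}_{\mathcal{P}}$ and the second-variable shift for $\widetilde{\xi{\rm xt}}_{\mathcal{I}}$ by directly re-indexing the complete (co)resolutions) rather than as a formal double induction, which is only a cosmetic difference.
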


\begin{proof}
Assume that $\xi$-$\mathcal{G}$pd$M=m<\infty$ and $\xi$-$\mathcal{G}$id$N=n<\infty$. Let $\xymatrix@C=2em{\mathbf{T}\ar[r]^{\nu}&\mathbf{P}\ar[r]^{\pi}&M}$ be a $\xi$-complete resolution of $M$ and $\xymatrix@C=2em{N\ar[r]^{\iota}&\mathbf{I}\ar[r]^{\mu}&\mathbf{Q}}$ a $\xi$-complete coresolution of $N$.

For any $i\in\mathbb{Z}$, there are $\mathbb{E}$-triangles
$$
\xymatrix@C=0.5cm{K_{i+1}\ar[r]&T_i\ar[r]&K_i\ar@{-->}[r]&,} \ \xymatrix@C=0.5cm{D_{i+1}\ar[r]&P_i\ar[r]&D_i\ar@{-->}[r]&}
$$
and
$$
\xymatrix@C=0.5cm{J_{i+1}\ar[r]&I_i\ar[r]&J_i\ar@{-->}[r]&,} \ \xymatrix@C=0.5cm{L_{i+1}\ar[r]&Q_i\ar[r]&L_i\ar@{-->}[r]&}
$$
in $\xi$. Set $M=D_0$ and $N=J_1$. Then $K_m=D_m$ is $\xi$-$\mathcal{G}$projective and $L_{-n+1}=J_{-n+1}$ is $\xi$-$\mathcal{G}$injective.

Consider the truncations
$$
\mathbf{T}_{\geq m}: \ \cdots\to T_{m+2}\to T_{m+1}\to T_m \to 0
$$
and
$$
\mathbf{Q}_{\leq -n}: \ 0\to  Q_{-n}\to Q_{-n-1}\to Q_{-n-2}\to \cdots.
$$
Then $\mathbf{T}[-m]\to \mathbf{T}_{\geq m}[-m]\to K_m$ is a $\xi$-complete resolution of $K_m$ and $L_{-n+1}\to \mathbf{Q}_{\leq -n}[n]\to \mathbf{Q}[n]$ is
 a $\xi$-complete coresolution of $L_{-n+1}$. Thus
  we have
 \begin{align*}
   \widetilde{\xi{\rm xt}}^i_{\mathcal{P}}(M,N)&=H^i\mathcal{C}(\mathbf{T},N)
   =H^{i-m}\mathcal{C}(\mathbf{T}[-m],N)
   =\widetilde{\xi{\rm xt}}^{i-m}_{\mathcal{P}}(K_m,N)\\
   &\cong \widetilde{\xi{\rm xt}}^{i-m-n}_{\mathcal{P}}(K_m,J_{-n+1}) \ \ (\mbox{by Proposition \ref{prop3.5}(4) and Theorem \ref{thm3.6} })\\
  & =\widetilde{\xi{\rm xt}}^{i-m-n}_{\mathcal{P}}(K_m,L_{-n+1})
    =H^{i-m-n}\mathcal{C}(\mathbf{T}[-m],L_{-n+1}) \\
    &\cong H^{i-m-n}\mathcal{C}(K_m,\mathbf{Q}[n])\ \ ({{\rm by \  Lemma} \ \ref{lem4.2}})\\
    &=H^{i-m}\mathcal{C}(K_m,\mathbf{Q})\\
    &=\widetilde{\xi{\rm xt}}^i_{\mathcal{I}}(K_m,N).
 \end{align*}
 Dually, using Proposition \ref{prop3.10} and Theorem \ref{thm3.11} we have
 $$
 \widetilde{\xi{\rm xt}}^i_{\mathcal{I}}(M,N)\cong \widetilde{\xi{\rm xt}}^{i-m}_{\mathcal{I}}(D_m,N)=\widetilde{\xi{\rm xt}}^{i-m}_{\mathcal{I}}(K_m,N).
 $$
 Therefore, $\widetilde{\xi{\rm xt}}^i_{\mathcal{P}}(M,N)\cong \widetilde{\xi{\rm xt}}^i_{\mathcal{I}}(M,N)$ for any $i\in\mathbb{Z}$.
\end{proof}

Motivated by Gedrich and Gruenberg's invariants of a ring \cite{GG}, and Asadollahi and Salarian's invariants to a triangulated category \cite{AS2}, the authors  assigned in \cite[Definition 4.2]{HZZ1} two invariants to an extriangulated category $\mathcal{C}$:
 $$\xi\textrm{-}{\rm silp}\mathcal{C}=\sup\{\xi\textrm{-}{\rm id}P \ | \ P\in{\mathcal{P}(\xi)}\},$$
$$\xi\textrm{-}{\rm spli}\mathcal{C}=\sup\{\xi\textrm{-}{\rm pd}I \ | \ I\in{\mathcal{I}(\xi)}\}.$$
It follows from \cite[Proposition 4.3]{HZZ1} that if both $\xi\textrm{-}{\rm silp}\mathcal{C}$ and $\xi\textrm{-}{\rm spli}\mathcal{C}$ are finite, then they are equal.

Recall from \cite[Definition 4.1]{HZZ1} that a full subcategory $\mathcal{X}\subseteq \mathcal{C}$ is called a {\it generating subcategory} of $\mathcal{C}$ if for all $M\in{\mathcal{C}}$, $\mathcal{C}(\mathcal{X},M)=0$ implies that $M=0$. Dually, a full subcategory $\mathcal{Y}\subseteq\mathcal{C}$ is called a \emph{cogenerating subcategory} of $\mathcal{C}$ if for
all $N\in{\mathcal{C}}$, $\mathcal{C}(N,\mathcal{Y})=0$ implies that $N=0$.

Next, we are in a position to prove the main result of this section.

\begin{thm}\label{thm:3.4}Let $\mathcal{C}$ be an extriangulated category, and let $\mathcal{P}(\xi)$ be a generating subcategory of $\mathcal{C}$ and $\mathcal{I}(\xi)$ a cogenerating subcategory of $\mathcal{C}$. If $\widetilde{\xi{\rm xt}}^i_{\mathcal{P}}(M,N)\cong \widetilde{\xi{\rm xt}}^i_{\mathcal{I}}(M,N)$ for all objects $M$ and $N$ in $\mathcal{C}$, then $\xi\textrm{-}{\rm spli}\mathcal{C}=\xi\textrm{-}{\rm silp}\mathcal{C}<\infty$.
 The converses hold if $\mathcal{C}$ satisfies the following condition:

{\rm \textbf{Condition $(\star)$}}: If  $N\in\mathcal{C}$ and $M\in{\widetilde{\mathcal{P}}(\xi)}$  such that $\xi{\rm xt}_{\xi}^{i}(M,N)=0$ for  any $i\geq1$, then $\mathcal{C}(M,N)\cong\xi{\rm xt}_{\xi}^{0}(M,N)$. Dually, if $N\in\mathcal{C}$  and $M\in{\widetilde{\mathcal{I}}(\xi)}$  such that $\xi{\rm xt}_{\xi}^{i}(N,M)=0$ for any  $i\geq1$, then $\mathcal{C}(N,M)\cong\xi{\rm xt}_{\xi}^{0}(N,M)$.
\end{thm}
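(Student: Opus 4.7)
The plan is to handle the two implications separately and to use Proposition~\ref{prop:key-result} as the engine for the converse.

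For the forward direction, the key observation is that for any $I\in\mathcal{I}(\xi)$, the $\xi$-injective coresolution of $I$ is concentrated in a single degree, so $\widetilde{\xi{\rm xt}}^i_{\mathcal{I}}(I,N)=0$ trivially for every $N$ and $i$. The balance hypothesis then transfers this vanishing to $\widetilde{\xi{\rm xt}}^i_{\mathcal{P}}(I,N)=0$; specializing to $N=I$ and $i=0$, I would unpack that the identity of a $\xi$-projective resolution $\mathbf{P}_I$ of $I$ becomes a boundary in $\widetilde{\mathcal{C}}(\mathbf{P}_I,\mathbf{P}_I)_0$, i.e.\ is cohomologous to a bounded-above homomorphism. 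This produces a null-homotopy of the identity in all sufficiently large degrees, which splits the tail of $\mathbf{P}_I$ and forces $\xi$-${\rm pd}\,I<\infty$. The dual computation, starting with the trivial vanishing of $\widetilde{\xi{\rm xt}}^i_{\mathcal{P}}(-,P)$ for $P\in\mathcal{P}(\xi)$ (Proposition \ref{prop3.5}(3)) and pushing it across the balance to $\widetilde{\xi{\rm xt}}^i_{\mathcal{I}}(-,P)=0$, gives $\xi$-${\rm id}\,P<\infty$. The generating and cogenerating hypotheses on $\mathcal{P}(\xi)$ and $\mathcal{I}(\xi)$, combined with \cite[Proposition~4.3]{HZZ1}, then upgrade this pointwise finiteness to $\xi$-${\rm spli}\,\mathcal{C}=\xi$-${\rm silp}\,\mathcal{C}<\infty$.

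For the converse direction, assume $n:=\xi$-${\rm spli}\,\mathcal{C}=\xi$-${\rm silp}\,\mathcal{C}<\infty$ and that Condition $(\star)$ holds. My strategy is to prove that every $M\in\mathcal{C}$ lies in $\widetilde{\mathcal{GP}}(\xi)\cap \widetilde{\mathcal{GI}}(\xi)$, at which point Proposition~\ref{prop:key-result} delivers the isomorphism $\widetilde{\xi{\rm xt}}^i_{\mathcal{P}}(M,N)\cong \widetilde{\xi{\rm xt}}^i_{\mathcal{I}}(M,N)$ for every $N$ and every $i$. Concretely, take a $\xi$-projective resolution of $M$ and let $K_n$ be its $n$-th syzygy. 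Since $\xi$-${\rm id}\,P\leq n$ for each $\xi$-projective $P$, I would splice the $\xi$-injective coresolutions (of length $\leq n$) of the resolution's terms to build a right-extension of $K_n$ by $\xi$-injectives, and then re-resolve these injectives via $\xi$-${\rm pd}\leq n$ to get $\xi$-projectives on the right. Condition $(\star)$ is used precisely to convert the Ext-vanishing statements arising at each stage into the chain-level liftings needed to glue everything into a single $\mathcal{C}(-,\mathcal{P}(\xi))$-exact bi-infinite complex, exhibiting $K_n$ as $\xi$-$\mathcal{G}$projective; hence $\xi$-$\mathcal{G}{\rm pd}\,M\leq n$. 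The dual construction gives $\xi$-$\mathcal{G}{\rm id}\,M\leq n$.

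The hard part is this last construction in the converse direction: assembling a genuine complete $\xi$-projective resolution out of finite spli/silp data inside an extriangulated category is where Condition $(\star)$ has to do real work, since in this generality Ext-vanishing alone does not automatically produce the morphisms needed to extend or glue resolutions. A secondary difficulty in the forward direction is passing from pointwise finiteness of $\xi$-${\rm pd}\,I$ and $\xi$-${\rm id}\,P$ to uniform finiteness of the global invariants $\xi$-${\rm spli}$ and $\xi$-${\rm silp}$; this is where the generating/cogenerating hypotheses on $\mathcal{P}(\xi)$ and $\mathcal{I}(\xi)$ must be used to bridge the gap.
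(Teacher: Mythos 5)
Your proposal follows essentially the same route as the paper: the forward direction extracts finiteness of $\xi$-${\rm id}$ of $\xi$-projectives and $\xi$-${\rm pd}$ of $\xi$-injectives from the trivial vanishing of one-sided complete cohomology plus the balance hypothesis (the paper delegates your ``identity is eventually null-homotopic, so the tail splits'' step to \cite[Theorem 3.10]{HZZZ}), and the converse reduces to showing every object has finite $\xi$-$\mathcal{G}$projective and $\xi$-$\mathcal{G}$injective dimension and then invoking Proposition \ref{prop:key-result}. The only real difference is that you sketch inline the proofs of the two cited ingredients, namely \cite[Theorem 3.10]{HZZZ} and \cite[Theorem 4.7]{HZZ1} --- the latter being exactly the spli/silp-to-finite-Gorenstein-dimension step (using Condition $(\star)$ and the generating/cogenerating hypotheses) that you correctly identify as the hard part.
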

\begin{proof} Assume that $\widetilde{\xi{\rm xt}}^i_{\mathcal{P}}(M,N)\cong \widetilde{\xi{\rm xt}}^i_{\mathcal{I}}(M,N)$ for all objects $M$ and $N$ in $\mathcal{C}$. Let $M$ be an object in $\mathcal{P}(\xi)$. Then $\widetilde{\xi{\rm xt}}^i_{\mathcal{P}}(M,M)=0$. Thus $\widetilde{\xi{\rm xt}}^i_{\mathcal{I}}(M,M)=0$, and hence $\xi\textrm{-}{\rm id}M<\infty$ by \cite[Theorem 3.10]{HZZZ}. Similarly, one can show that $\xi\textrm{-}{\rm pd}N<\infty$ for any object in $\mathcal{I}(\xi)$. So $\xi\textrm{-}{\rm spli}\mathcal{C}=\xi\textrm{-}{\rm silp}\mathcal{C}<\infty$ by \cite[Proposition 4.3]{HZZ1}.

By hypothesis and \cite[Theorem 4.7]{HZZ1}, we get that every object in $\mathcal{C}$ has finite $\xi$-$\mathcal{G}$projective dimension and finite $\xi$-$\mathcal{G}$injective dimension. So $\widetilde{\xi{\rm xt}}^i_{\mathcal{P}}(M,N)\cong \widetilde{\xi{\rm xt}}^i_{\mathcal{I}}(M,N)$ for all objects $M$ and $N$ in $\mathcal{C}$ by Proposition \ref{prop:key-result}.
\end{proof}

\begin{Ex} {\emph{(see \cite[Example 4.10]{HZZ1})}} \label{Ex:3.12}
\emph{(1)} Assume that $(\mathcal{C}, \mathbb{E}, \mathfrak{s})$ is an exact category and $\xi$ is a class of exact sequences which is closed under isomorphisms. One can check that Condition $(\star)$ in Theorem \ref{thm:3.4} is automatically satisfied.

\emph{(2)} If $\mathcal{C}$ is a triangulated category and the class $\xi$ of triangles is closed under
isomorphisms and suspension \emph{(see \cite[Section 2.2]{Bel1} and \cite[Remark 3.3(3)]{HZZ})}, then Condition $(\star)$ in Theorem \ref{thm:3.4} is also satisfied.
\end{Ex}

Recall from \cite[Definition 4.6]{AS2} that a triangulated category $\mathcal{C}$ is called \emph{Gorenstein} if there exists a
positive integer $n$ such that any object of $\mathcal{C}$ has both $\xi$-$\mathcal{G}$projective and $\xi$-$\mathcal{G}$injective dimension less than or equal to $n$.

\begin{cor} \label{cor:3.7}Let $\mathcal{C}$ be a triangulated category, and let $\mathcal{P}(\xi)$ be a generating subcategory of $\mathcal{C}$ and $\mathcal{I}(\xi)$ a cogenerating subcategory of $\mathcal{C}$. Then $\mathcal{C}$ is Gorenstein if and only if $\widetilde{\xi{\rm xt}}^i_{\mathcal{P}}(M,N)\cong \widetilde{\xi{\rm xt}}^i_{\mathcal{I}}(M,N)$ for all objects $M$ and $N$ in $\mathcal{C}$.
\end{cor}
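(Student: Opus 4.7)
The plan is to derive the corollary as a direct application of Theorem \ref{thm:3.4} combined with Proposition \ref{prop:key-result}, exploiting the fact that a triangulated category automatically satisfies Condition $(\star)$ by Example \ref{Ex:3.12}(2). The two implications will be handled separately, with the key being to translate between ``every object has bounded $\xi$-$\mathcal{G}$projective and $\xi$-$\mathcal{G}$injective dimensions'' (the Gorenstein property) and ``$\xi\textrm{-}{\rm spli}\mathcal{C}=\xi\textrm{-}{\rm silp}\mathcal{C}<\infty$'' (the conclusion of Theorem \ref{thm:3.4}).

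For the implication ``Gorenstein $\Rightarrow$ isomorphism of $\xi$-complete cohomologies'', I would argue as follows. Suppose $\mathcal{C}$ is Gorenstein, so there exists $n\geq 0$ such that every object has $\xi$-$\mathcal{G}$projective and $\xi$-$\mathcal{G}$injective dimension at most $n$. In particular, for any $M$ and $N$ in $\mathcal{C}$, we have $M\in\widetilde{\mathcal{GP}}(\xi)$ and $N\in\widetilde{\mathcal{GI}}(\xi)$. Proposition \ref{prop:key-result} then applies and gives $\widetilde{\xi{\rm xt}}^i_{\mathcal{P}}(M,N)\cong \widetilde{\xi{\rm xt}}^i_{\mathcal{I}}(M,N)$ for all $i\in\mathbb{Z}$. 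This direction is essentially free from the machinery already in place.

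For the reverse implication, assume the isomorphism $\widetilde{\xi{\rm xt}}^i_{\mathcal{P}}(M,N)\cong \widetilde{\xi{\rm xt}}^i_{\mathcal{I}}(M,N)$ holds for all $M,N\in\mathcal{C}$. Since $\mathcal{C}$ is triangulated, Example \ref{Ex:3.12}(2) ensures that Condition $(\star)$ is automatic (assuming $\xi$ is closed under suspension and isomorphisms, as is customary). Thus Theorem \ref{thm:3.4} is fully applicable, and its forward direction gives $\xi\textrm{-}{\rm spli}\mathcal{C}=\xi\textrm{-}{\rm silp}\mathcal{C}=:n<\infty$. To extract from this the uniform bound required by the definition of a Gorenstein triangulated category, I would invoke \cite[Theorem 4.7]{HZZ1} (the same result used inside the proof of Theorem \ref{thm:3.4}): it shows that the finiteness of both invariants $\xi\textrm{-}{\rm spli}\mathcal{C}$ and $\xi\textrm{-}{\rm silp}\mathcal{C}$ forces every object of $\mathcal{C}$ to have both $\xi$-$\mathcal{G}$projective and $\xi$-$\mathcal{G}$injective dimension bounded by $n$. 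This is exactly the Gorenstein condition.

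The main obstacle I anticipate is the last step: packaging finite $\xi\textrm{-}{\rm silp}\mathcal{C}=\xi\textrm{-}{\rm spli}\mathcal{C}$ into a \emph{uniform} bound on $\xi$-$\mathcal{G}$dimensions, rather than object-wise finiteness. If \cite[Theorem 4.7]{HZZ1} does not supply this bound directly in the form needed, one would have to re-examine its proof or supply a short supplementary argument: given an object $X$ and a truncated $\xi$-projective resolution of length $n$, the $n$-th syzygy $K$ fits in $\xi$-triangles whose relevant $\xi{\rm xt}$-groups vanish by the definition of $\xi\textrm{-}{\rm silp}\mathcal{C}$, forcing $K\in\mathcal{GP}(\xi)$ and hence $\xi$-$\mathcal{G}\mathrm{pd}\,X\leq n$; the dual argument gives the $\xi$-$\mathcal{G}$injective bound. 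Once this is settled, the corollary follows at once.
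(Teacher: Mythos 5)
Your proposal is correct and follows essentially the same route as the paper, which deduces the corollary from Theorem \ref{thm:3.4}, Example \ref{Ex:3.12}(2), and the characterization of Gorenstein triangulated categories via $\xi\textrm{-}{\rm spli}=\xi\textrm{-}{\rm silp}<\infty$ (the paper cites \cite[Corollary 4.12]{HZZ1} for exactly the equivalence you reconstruct, including the uniform bound you were worried about). Your direct use of Proposition \ref{prop:key-result} for the forward direction and your syzygy argument for the uniform bound are both consistent with what the cited results already provide.
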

\begin{proof} The result holds by Theorem \ref{thm:3.4}, Example \ref{Ex:3.12}(2) and \cite[Corollary 4.12]{HZZ1}.
\end{proof}

Let $R$ be a ring and Mod$R$ the category of left $R$-modules. Then it is clear that the class of projective left $R$-modules is a generating subcategory of Mod$R$ and the class of injective $R$-modules is a cogenerating subcategory of Mod$R$. If we assume that $\mathcal{C}$ is the category of left $R$-modules and $\xi$ is the class of all exact sequences in Mod$R$, then by Theorem \ref{thm:3.4} and Example \ref{Ex:3.12}(1), we have the following corollary, which has been proved by Nucinkis in  \cite{Nucinkis}.

\begin{cor}{\rm (}see \cite[Theorem 5.2]{Nucinkis}{\rm)} Let $R$ be a ring. Then $\widetilde{\xi{\rm xt}}^i_{\mathcal{P}}(M,N)\cong \widetilde{\xi{\rm xt}}^i_{\mathcal{I}}(M,N)$ for all left $R$-modules $M$ and $N$ if and only if  ${\rm spli}R={\rm silp}R<\infty$.
\end{cor}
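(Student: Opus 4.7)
The plan is to derive this corollary as a direct specialisation of Theorem~\ref{thm:3.4}, taking $\mathcal{C}=\mathrm{Mod}\,R$ with its canonical (abelian, hence exact) structure and $\xi$ the class of all short exact sequences in $\mathrm{Mod}\,R$. Under this choice the $\xi$-projectives (resp.\ $\xi$-injectives) recover the usual projective (resp.\ injective) $R$-modules, so $\xi\textrm{-}\mathrm{spli}\,\mathcal{C}=\mathrm{spli}\,R$ and $\xi\textrm{-}\mathrm{silp}\,\mathcal{C}=\mathrm{silp}\,R$; likewise, the two flavours of $\xi$-complete cohomology defined in Definition~\ref{df:3.6} reduce, respectively, to the projective- and injective-resolution versions of the Mislin--Nucinkis complete cohomology of $R$-modules.

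What remains is to verify the hypotheses of Theorem~\ref{thm:3.4} in this concrete setting, and then read off the conclusion. First, $\mathrm{Mod}\,R$ has enough projectives and injectives, is (weakly) idempotent complete and so satisfies Condition (WIC). Second, every nonzero $M$ admits a surjection $R^{(X)}\twoheadrightarrow M$ and an embedding $M\hookrightarrow E(M)$ into an injective envelope, which immediately shows $\mathcal{P}(\xi)$ is generating and $\mathcal{I}(\xi)$ is cogenerating. Third, Condition $(\star)$ is automatic for any exact category by Example~\ref{Ex:3.12}(1). Applying Theorem~\ref{thm:3.4} in both directions then yields the stated biconditional and recovers Nucinkis's result \cite[Theorem 5.2]{Nucinkis}.

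The only subtlety, which is bookkeeping rather than a genuine obstacle, is the identification of the extriangulated-categorical $\widetilde{\xi\mathrm{xt}}^{i}_{\mathcal{P}}$ and $\widetilde{\xi\mathrm{xt}}^{i}_{\mathcal{I}}$ with the classical complete cohomology groups $\widehat{\mathrm{Ext}}^{i}_{R}$ of \cite{Mislin,Nucinkis}. This identification is transparent from Remark~\ref{3.3} and Definition~\ref{df:3.6}: for $\xi$ the class of all short exact sequences the complexes $\widetilde{\mathcal{C}}(\mathbf{P}_M,\mathbf{P}_N)$ and $\widetilde{\mathcal{C}}(\mathbf{I}_M,\mathbf{I}_N)$ are precisely the quotients of $\mathrm{Hom}$-complexes by eventually-zero homomorphisms used by Mislin and Nucinkis. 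Once this translation is in place, Theorem~\ref{thm:3.4} does all of the real work.
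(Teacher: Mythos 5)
Your proposal is correct and follows essentially the same route as the paper: specialize Theorem \ref{thm:3.4} to $\mathcal{C}=\mathrm{Mod}\,R$ with $\xi$ the class of all short exact sequences, observe that projectives generate and injectives cogenerate, and use Example \ref{Ex:3.12}(1) to discharge Condition $(\star)$ for the converse direction. The paper states this in one line before the corollary; you have merely (and correctly) filled in the routine verifications.
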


Let $\Lambda$ be an artin algebra over a commutative artinian ring $k$ and $\textrm{mod}\Lambda$
denotes the category of finitely generated left $\Lambda$-modules. We denote by $(\textrm{mod}\Lambda)^{op}$
the opposite category of $\textrm{mod}\Lambda$. Suppose $F$ is an additive subbifunctor of the additive bifunctor $\textrm{Ext}_{\Lambda}^{1}(-,-):(\textrm{mod}\Lambda)^{op}\times\textrm{mod}\Lambda\rightarrow \textrm{Ab}$. Recall from \cite{Auslander} that a short exact sequence $\eta: 0\rightarrow A\rightarrow B\rightarrow C\rightarrow 0$ in $\textrm{mod}\Lambda$ is said to be $F$-exact if $\eta$ is in $F(C,A)$. A $\Lambda$-module $P$ (resp. $I$) in $\textrm{mod}\Lambda$ is said to be $F$-projective
(resp. $F$-injective) if for each $F$-exact sequence $0\rightarrow A\rightarrow B\rightarrow C\rightarrow 0$,
the sequence $0\rightarrow \textrm{Hom}_{\Lambda}(P,A) \rightarrow \textrm{Hom}_{\Lambda}(P,B)\rightarrow \textrm{Hom}_{\Lambda}(P,C)\rightarrow 0$ (resp.
$0\rightarrow \textrm{Hom}_{\Lambda}(C,I) \rightarrow \textrm{Hom}_{\Lambda}(B,I)\rightarrow \textrm{Hom}_{\Lambda}(A,I)\rightarrow 0$) is exact. The full subcategory
of $\textrm{mod}\Lambda$ consisting of all $F$-projective (resp. $F$-injective) modules is denoted
by $\mathcal{P}(F)$ (resp. $\mathcal{I}(F)$).
Assume that $F$ has enough projectives and injectives. Then $(\textrm{mod}\Lambda,\varepsilon)$ is an exact category, where  $\varepsilon$ is the class of $F$-exact sequences.


On the other hand,
it follows from \cite[Theorem 3.4]{tang} that $\Lambda$ is $F$-Gorenstein if and only if $\sup\{\textrm{pd}_{F} I \ | \ I \in{\mathcal{I}(F)}\} = \sup\{\textrm{id}_{F}P \ | \ P\in{\mathcal{P}(F)}\}<\infty$, where $\textrm{pd}_{F}I =\textrm{inf}\{n \ | \ \textrm{Ext}_{F}^{n+1}(I,B)=0 \ \textrm{for} \ \textrm{any} \ B \in{\textrm{mod}\Lambda}\}$ and $\textrm{id}_{F}P =\textrm{inf}\{n \ | \ \textrm{Ext}_{F}^{n+1}(A,P)=0 \ \textrm{for} \ \textrm{any} \ A \in{\textrm{mod}\Lambda}\}$.  By Theorem \ref{thm:3.4} and Example \ref{Ex:3.12}(1), we have the following corollary which characterizes when an arrtin algebra is $F$-Gorenstein.

\begin{cor}\label{cor3} Let $\Lambda$ be an artian algebra and $F$ an additive subbifunctor with enough projectives and injectives.  If we assume that $\mathcal{C}$ is the exact category {\rm $(\textrm{mod}\Lambda,\varepsilon)$} and $\xi=\varepsilon$ is the class of $F$-exact sequences in {\rm$\textrm{mod}\Lambda$}, then $\Lambda$ is $F$-Gorenstein if and only if $\widetilde{\xi{\rm xt}}^i_{\mathcal{P}}(M,N)\cong \widetilde{\xi{\rm xt}}^i_{\mathcal{I}}(M,N)$ for all finitely generated left $\Lambda$-modules $M$ and $N$.
\end{cor}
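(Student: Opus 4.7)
The plan is to derive the corollary as a direct specialization of Theorem \ref{thm:3.4}, using Example \ref{Ex:3.12}(1) to dispose of Condition $(\star)$ and the characterization from \cite[Theorem 3.4]{tang} to translate the invariants into the $F$-Gorenstein condition. So the proof really only amounts to verifying that the hypotheses of Theorem \ref{thm:3.4} hold in the exact category $(\textrm{mod}\Lambda,\varepsilon)$ and then matching the two sides of the equivalence through known identifications.

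First I would note that $(\textrm{mod}\Lambda,\varepsilon)$ is an exact category (because $F$ has enough projectives and injectives), hence an extriangulated category with a proper class of $\mathbb{E}$-triangles given by $\xi=\varepsilon$. By Example \ref{Ex:3.12}(1), Condition $(\star)$ is automatic in the exact setting, so the equivalence in Theorem \ref{thm:3.4} is available in full strength. Next, since $F$ has enough projectives, for every nonzero $M\in\textrm{mod}\Lambda$ there is an $F$-exact sequence $0\to K\to P\to M\to 0$ with $P\in\mathcal{P}(F)$, and the deflation $P\to M$ is nonzero, witnessing that $\mathcal{C}(\mathcal{P}(F),M)\neq 0$. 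Thus $\mathcal{P}(\xi)=\mathcal{P}(F)$ is a generating subcategory, and dually $\mathcal{I}(\xi)=\mathcal{I}(F)$ is a cogenerating subcategory.

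With these hypotheses in place, Theorem \ref{thm:3.4} tells us that
\[
\widetilde{\xi{\rm xt}}^i_{\mathcal{P}}(M,N)\cong \widetilde{\xi{\rm xt}}^i_{\mathcal{I}}(M,N) \ \text{for all } M,N\in\textrm{mod}\Lambda\Longleftrightarrow \xi\textrm{-}{\rm spli}\mathcal{C}=\xi\textrm{-}{\rm silp}\mathcal{C}<\infty.
\]
In the present setting, the $\xi$-projective/injective dimensions coincide with the $F$-projective and $F$-injective dimensions defined via $\textrm{Ext}_F^\ast$ (both are computed from $F$-exact resolutions by $F$-projectives, respectively $F$-injectives), so
\[
\xi\textrm{-}{\rm silp}\mathcal{C}=\sup\{\textrm{id}_F P \mid P\in\mathcal{P}(F)\},\qquad \xi\textrm{-}{\rm spli}\mathcal{C}=\sup\{\textrm{pd}_F I \mid I\in\mathcal{I}(F)\}.
\]
By \cite[Theorem 3.4]{tang}, the finiteness and equality of these two suprema is precisely the $F$-Gorenstein condition on $\Lambda$, which concludes the proof.

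The only real obstacle is bookkeeping: one must verify that the invariants $\xi\textrm{-}{\rm silp}\mathcal{C}$ and $\xi\textrm{-}{\rm spli}\mathcal{C}$ introduced in \cite{HZZ1} for extriangulated categories indeed reduce to the $F$-theoretic invariants used in \cite{tang} when $\mathcal{C}=(\textrm{mod}\Lambda,\varepsilon)$ and $\xi$ is the full class of $F$-exact sequences. This is essentially a definitional unfolding (both sides are computed from the same resolutions against the same test class), but it is the one step that cannot simply be quoted and needs a line of explanation.
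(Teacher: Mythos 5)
Your proposal is correct and follows exactly the route the paper takes: the corollary is obtained by specializing Theorem \ref{thm:3.4} to the exact category $(\textrm{mod}\Lambda,\varepsilon)$, invoking Example \ref{Ex:3.12}(1) for Condition $(\star)$, and translating $\xi\textrm{-}{\rm spli}\mathcal{C}=\xi\textrm{-}{\rm silp}\mathcal{C}<\infty$ into the $F$-Gorenstein condition via \cite[Theorem 3.4]{tang}. Your explicit verification that $\mathcal{P}(F)$ is generating and $\mathcal{I}(F)$ is cogenerating is a detail the paper leaves implicit, but it is the same argument.
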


\bigskip

\renewcommand\refname{\bf References}

\vspace{4mm}
\small

\hspace{-1.2em}\textbf{Jiangsheng Hu}\\
School of Mathematics and Physics, Jiangsu University of Technology,
 Changzhou 213001, China\\
E-mail: jiangshenghu@jsut.edu.cn\\[1mm]
\textbf{Dongdong Zhang}\\
Department of Mathematics, Zhejiang Normal University,
 Jinhua 321004, China\\
E-mail: zdd@zjnu.cn\\[1mm]
\textbf{Tiwei Zhao}\\
School of Mathematical Sciences, Qufu Normal University, Qufu 273165, China\\
E-mail: tiweizhao@qfnu.edu.cn\\[1mm]
\textbf{Panyue Zhou}\\
College of Mathematics, Hunan Institute of Science and Technology, Yueyang 414006, China\\
E-mail: panyuezhou@163.com

\end{document}